





\documentclass[sn-mathphys]{sn-jnl}



\jyear{2023}%

\raggedbottom




\usepackage{amsmath,amsbsy,amssymb,amsthm}
\usepackage{multirow,multicol,tabularx,booktabs}
\usepackage{graphicx,placeins,color,url}
\usepackage{bbm,bm}

\usepackage[shortlabels]{enumitem}
\usepackage{mathrsfs}
\usepackage{textcase}


\catcode`\|=12\relax

\usepackage{tikz}
\usetikzlibrary{arrows,arrows.meta,shapes.arrows,positioning,calc}

\newtheorem{conj}{Conjecture}[section]
\newtheorem{thm}[conj]{\bf Theorem}
\newtheorem{defi}[conj]{\bf Definition}

\newtheorem{prop}[conj]{\bf Proposition}
\newtheorem{lemma}[conj]{\bf Lemma}

\def\to{\rightarrow}

\def\Cov{\operatorname{Cov}}
\def\Var{\operatorname{Var}}

\def\Rank{\operatorname{Rank}}

\def\Vect{\operatorname{Vect}}
\def\dim{\operatorname{dim}}

\def\Pois{\operatorname{Pois}}
\def\Exp{\operatorname{Exp}}
\def\Ber{\operatorname{Ber}}

\def\EssSup{\operatorname{ess \, sup}}

\def\Ac{\mbox{$\mathcal A$}}
\def\Bc{{\mathscr B}}
\def\Cc{{\mathscr C}}

\def\Mc{{\mathcal M}}
\def\Nc{\mbox{$\mathcal N$}}

\def\Xc{{\mathcal X}}

\def\Gb{{\mathbb G}}

\def\NN{{\mathbb N}}

\def\Rb{{\mathbb R}}

\def\e{ {\bf e}}
\def\t{ {\bf t}}

\def\v{ {\bf v}}
\def\x{ {\bf x}}

\def\X{ {\bf X}}

\def\Z{ {\bf Z}}

\def\1{\mathbbm{1}}

\def\support{{\rm Supp}}

\def\wt{\widetilde}

\newcommand{\Vphi}{{V_\phi}}
\newcommand{\Rphi}{{R_\phi}}
\newcommand{\Valpha}{{V_\alpha}}
\newcommand{\Ralpha}{{R_\alpha}}


\newcommand{\Ralphabig}[3]{{R_\alpha \big(\hspace{-0.1em}
    #1 \, \big| \, 
    #2 \, , \,
    #3 \big)}}

\newcommand{\RalphaBigg}[3]{{R_\alpha \Bigg(
    #1 \, \Bigg| \, 
    #2 \, , \,
    #3 \Bigg)}}

\errorcontextlines=10

\begin{document}

\title[Codivergences and information matrices]{Codivergences and information matrices}


\author*[1]{\fnm{Alexis} \sur{Derumigny}}\email{a.f.f.derumigny@tudelft.nl}


\author[2]{\fnm{Johannes} \sur{Schmidt-Hieber}}\email{a.j.schmidt-hieber@utwente.nl}

\affil[1]{\orgdiv{Department of Applied Mathematics}, \orgname{Delft University of Technology}, \orgaddress{\street{Mekelweg 4}, \postcode{2628 CD}, \city{Delft}, \country{The Netherlands}}}


\affil[2]{\orgname{University of Twente}, \orgaddress{\street{Drienerlolaan 5}, \city{Enschede}, \postcode{7522 NB}, \country{The Netherlands}}}


\abstract{We propose a new concept of codivergence, which quantifies the similarity between two probability measures $P_1, P_2$ relative to a reference probability measure $P_0$. In the neighborhood of the reference measure $P_0$, a codivergence behaves like an inner product between the measures $P_1-P_0$ and $P_2-P_0$. Codivergences of covariance-type and correlation-type are introduced and studied with a focus on two specific correlation-type codivergences, the $\chi^2$-codivergence and the Hellinger codivergence.} We derive explicit expressions for several common parametric families of probability distributions. For a codivergence, we introduce moreover the divergence matrix as an analogue of the Gram matrix. It is shown that the $\chi^2$-divergence matrix satisfies a data-processing inequality.

\keywords{Divergence, Chi-square divergence, Hellinger affinity, Gram matrix}

\pacs[MSC Classification]{62B11, 46E27, 15A63}

\maketitle

\newpage 
\section{Introduction}

One of the objectives of information geometry is to measure distances or angles in statistical spaces, usually for parametric models.
This is often done by the use of a divergence, generating a Riemannian manifold structure on the considered space of distributions, see \cite{amari2016information,ay2017information,nielsen2020elementary}.
Divergences between probability measures quantify a certain notion of difference between them. Divergences are in general not symmetric, as opposed to distances. Famous examples of divergences include the Kullback-Leibler divergence, the $\chi^2$-divergence, and the Hellinger distance.

\medskip

In this article, we are interested in defining a local notion of inner product between two probability measures in the neighborhood of a given reference probability measure $P_0$. This allows us to identify different directions relative to $P_0$, and to give some meaning to the ``angle'' between these directions. Contrary to most of the previous work on finite-dimensional Riemannian manifolds spanned by specific parametric statistical models, we do not require any parametric restrictions on the considered probability measures.

\medskip

Motivation and application of our approach is the recently considered generic framework to derive lower bounds for the trade-off between bias and variance in nonparametric statistical models \cite{2022lowerBoundsBV}. The key ingredient in this lower bound strategy are so-called change of expectation inequalities that relate the change of the expected value of a random variable with respect to different distributions to the variance and also involve the divergence matrices examined in this work. Another possible area of application are the lower bounds for statistical query algorithms, see e.g. \cite{feldman2017statistical,diakonikolas2017statistical}.

\medskip

Regarding work on infinite-dimensional information geometry, \cite{cena2007exponential,pistone1995infinite} studied the manifold generated by all probability densities connected to a given probability density.
\cite{pistone2013nonparametric} reviews a more general theory on infinite-dimensional statistical manifolds given a reference density.
Another line of work \cite{holbrook2017nonparametric, newton2012infinite, newton2016infinite,newton2019class} seeks to define infinite-dimensional manifolds, with applications to Bayesian estimation and the choice of priors. \cite{srivastava2007riemannian} consider different possible structures on the set of probability densities on $[0,1]$.

\medskip

The article is structured as follows.
In Section~\ref{sec:codiv}, we introduce a general concept of codivergence, study specific properties of codivergences on the space of probability measures, and discuss specific (classes of) codivergences. Section~\ref{sec:div_matrices} considers the construction of divergence matrices from a given codivergence. Section~\ref{sec:data_processing} is devoted to the data-processing inequality that holds for the $\chi^2$-divergence matrix introduced in Section~\ref{sec:div_matrices}, thereby generalizing the usual data-processing inequality for the $\chi^2$-divergence.
Section~\ref{sec.explicit} provides derivations of explicit expressions for a class of codivergences applied to common parametric models. Elementary facts on ranks from linear algebra are collected in Section~\ref{sec:useful_lemmas}.

\medskip

\noindent
\textit{Notation:} 
If $P$ is a probability measure and $\X$ a random vector, we write $E_P[\X]$ and $\Cov_P(\X)$ for the expectation vector and covariance matrix with respect to $P,$ respectively.

\section{Codivergences}
\label{sec:codiv}

\subsection{Abstract framework and definition}

We start by recalling the definition of a divergence \cite[Definition 1.1]{amari2016information}. This definition is situated within the framework of a $d$-dimensional differentiable manifold $\Xc$ with an atlas $(U_i, \varphi_i)$.
Formally, this means that the $(U_i)$ are an open cover of the topological space $\Xc$, and $\varphi_i: U_i \to \Rb^d$ are isomorphisms such that
$\varphi_j \circ \varphi_i^{-1}$ is $C^1$ on $\varphi_i(U_i \cap U_j) \to \varphi_j(U_i \cap U_j)$ \cite{lang2012differential}.

\begin{defi}
    A divergence $D$ on a $d$-dimensional differentiable manifold $\Xc$ is a function $\Xc^2 \to \Rb_+$ satisfying
    \vspace{0.2em}
    \begin{enumerate}
        \item[(i)] $\forall P, Q \in \Xc,$ $D(P|Q) = 0$ if and only if $P = Q$.
        \vspace{0.2em}

        \item[(ii)] For all $P \in \Xc$, for any chart $(U, \varphi)$ with $P \in U$, there exists a matrix $G = G(P)$ such that for any $Q \in U$,
        \begin{align}
            \hspace{-2em}
            D(P | Q) = \frac{1}{2}
            \big( \varphi(Q) - \varphi(P) \big)^T G 
            \big( \varphi(Q) - \varphi(P) \big) 
            + O\Big( \| \varphi(Q) - \varphi(P) \|^3 \Big).
            \label{eq:def:divergence}
        \end{align}
    \end{enumerate}
    \label{def:divergence}
\end{defi}

The matrix $G=G(P)$ may depend on the choice of coordinates $\varphi$. For the most common divergences, $G$ is symmetric, positive-definite and thus defines a scalar product on the tangent space at $P$.
Whereas a divergence measures the similarity between two elements $P,Q\in \Xc,$ we want to define codivergences measuring the angle $\sphericalangle P_1P_0P_2$ of $P_1,P_2\in \Xc$ relative to $P_0\in \Xc.$

\medskip

Equation \eqref{eq:def:divergence} states that the divergence $D(P|Q)$ is a quadratic form in terms of the local coordinates $\varphi(Q) \in \Rb^d,$ whenever $P$ and $Q$ are close. Generalizing to the infinite-dimensional case  requires to work with bilinear forms instead. Moreover for the infinite-dimensional setting, imposing an expansion of the form \eqref{eq:def:divergence} in every possible direction around $P \in \Xc$ is restrictive.
We therefore allow the quadratic expansion to hold in a possibly smaller bilinear expansion domain. Furthermore, we allow codivergences to attain the value $+ \infty.$ This is inspired by existing statistical divergences (such as $\chi^2$- or Kullback-Leibler divergences) that can also take the value $+\infty$. 
Therefore, imposing an expansion of the form \eqref{eq:def:divergence} globally may not be possible as the codivergence on the left-hand side of \eqref{eq:def:divergence} may take the value $+\infty$ in some directions away from $P$, while the right-hand side of \eqref{eq:def:divergence} is always finite.

\medskip

We now provide the definition of a codivergence if $\Xc$ is a subset of a real vector space.

\begin{defi} \label{def:codiv}
    Let $\Xc$ and $(E_{u})_{u\in \Xc}$ be a subset and a family of subspaces of a real vector space $E$, respectively.
    A function $(u,v,w) \in \Xc^3 \mapsto
    D(u | v, w) \in \Rb \cup \{ + \infty\}$ defines a \emph{codivergence} on $\Xc$ with \emph{bilinear expansion domain} $E_{u}$ at $u,$ if for any $u, v, w \in \Xc,$
    \vspace{0.2em}
    \begin{enumerate}
        \item[(i)] 
        $D(u | v, w) = D(u | w, v)$;
        \vspace{0.2em}
        
        \item[(ii)] $D(u | v, v) \geq 0$, with equality if $u = v$;
        \vspace{0.2em}
        
        \item[(iii)] there exists a bilinear map $\langle \cdot, \cdot \rangle_{u}$ defined on $E_{u}$, such that, for any $h, g \in E_{u}$ and for any scalars $s, t$ in some sufficiently small open neighborhood of $(0,0)$ (that may depend on $h$ and $g$) with respect to the Euclidean topology in $\Rb^2$, we have $(u + t h, \ u + sg) \in \Xc^2,$ $D \big(u  \, \big| \,
        u + t h \, , \, u + s g\big) < + \infty,$ and $D \big(u  \, \big| \,
        u + t h \, , \, u + s g\big)
        = ts \langle h, g \rangle_{u}
        + o(t^2 + s^2)$ as $(s,t) \to (0,0)$.
    \end{enumerate}
\end{defi}

The last part of the definition imposes that, locally around each $u$, the codivergence $(v, w) \mapsto D(u | v, w)$ is finite and behaves like a bilinear form
in the centered variables $(v-u, w-u).$
As a consequence, for a given $u$, the mapping $(v, w) \mapsto D(u | v, w)$ is Gateaux-differentiable on $\Xc^2$ at $(u, u)$ with Gateaux derivative $0$ in every direction $(h,g) \in E_u^2$.
Condition (iii) can moreover be understood as a second-order Taylor expansion at $(u, u)$ in the direction $(h,g)$. The mapping $(v, w) \mapsto D(u | v, w)$ needs, however, not to be twice Gateaux-differentiable at $(u, u)$ for $(iii)$ to hold.
This is analogous to usual counter-examples in analysis where a function may admit a second-order Taylor expansion at a given point without being twice differentiable.
Nevertheless, if $D \big(u  \, \big| \, u + t h \, , \, u + s g\big)$ is twice differentiable in $(t,s)$ at $(0,0)$, then the partial derivative
$\partial^2 D \big(u  \, \big| \, u + t h \, , \, u + s g\big)/ \partial t \partial s$ at $(0,0)$ must be equal to $\langle h, g \rangle_{u}$.
We refer to \cite{averbukh1967theory} for a discussion on higher-order functional derivatives. 

\medskip

We also provide a definition of codivergence if $\Xc$ is a differentiable Banach manifold, see \cite{lang2012differential,bourles2019fundamentals} for an introduction to Banach manifolds. Let $B$ be a Banach space and $\Xc$ be a Banach manifold modeled on $B$. This guarantees existence of a $B$-atlas $(U_i, \varphi_i)$ with $U_i$ an open cover of $\Xc$ and $\varphi_i: U_i \to B$ such that $\varphi_j \circ \varphi_i^{-1}$ is $C^1$ (with respect to the norm on $B$).

\medskip

This generalization can be useful in the case where the space $\Xc$ is not flat. Indeed, part (iii) of Definition~\ref{def:codiv} imposes that for $h \in E_u$, we must have $u + t h \in \Xc$ for $t$ small enough.
On the contrary, in the following definition we consider a more subtle case where the point $u$ may be approached on a smooth curve (not necessarily affine), under the assumption that $\Xc$ is a $B$-manifold.

\medskip

We first recall the construction of the tangent space via curves following \cite[Definition 2.21]{bourles2019fundamentals} and \cite[Section 2.1.1]{lee2022manifolds}:
for a fixed $u \in \Xc$, let $i$ be such that $u \in U_i$ and let $\Cc_u$ be the set of smooth curves $c$ such that $c: [-1,1] \to U_i$ and $c(0) = u$.
We define an equivalence relation $\sim$ on $\Cc_u$ by $c_1 \sim c_2$ if for all smooth real-valued functions $f$ on $U_i$,
we have $(f \circ c_1)'(0) = (f \circ c_2)'(0)$.
We define the tangent space at $u$ as the quotient set $T_u := \Cc_u /{\sim}$, which can be given a vector space structure isomorphic to $B$.

\medskip

We give a short outline of the main ideas to obtain this property.
Let $D$ denote the Fréchet differential operator.
For $\overline{c} \in T_u$ and $c$ a representative of the equivalence class $\overline{c}$, note that $\varphi_i \circ c: [-1,1] \to B$ is differentiable (by assumption on $c$); the mapping $D(\varphi_i \circ c)(0)$ is linear from $\Rb$ to $B$ and can therefore be identified with an element of $B$ itself; this element $D(\varphi_i \circ c)(0)$ also does not depend on the representative $c$.
This defines a mapping $\theta_u: T_u \mapsto B$ by $\theta_u(\overline{c}) := D(\varphi_i \circ c)(0)$.
It can be shown that $\theta_u$ is bijective.
Through its inverse $\theta_u^{-1}$ one can transport the vector space structure of $B$ on $T_u$, making it a real vector space too.

\begin{defi} \label{def:codiv_manifold}
    Let $\Xc$ be a $B$-manifold.
    A function $(u,v,w) \in \Xc^3 \mapsto
    D(u | v, w) \in \Rb \cup \{ + \infty\}$ defines a \emph{codivergence} on $\Xc$ with \emph{bilinear expansion domain} $E_{u}$ at $u,$ if for any $u, v, w \in \Xc,$
    \begin{enumerate}
        \item[(i)] $D(u | v, w) = D(u | w, v)$;
        \vspace{0.2em}
        
        \item[(ii)] $D(u | v, v) \geq 0$, with equality if $u = v$;
        \vspace{0.2em}
        
        \item[(iii)] $E_u$ is a subspace of the tangent space $T_u$ of $\Xc$ at $u$;
        \vspace{0.2em}
        
        \item[(iv)] there exists a bilinear map $\langle \cdot, \cdot \rangle_{u}$ defined on $E_{u}$.
        For any $\overline{g}, \overline{h} \in E_{u}$,
        for any representatives $g$ and $h$ of the respective equivalence classes $\overline{g}$ and $\overline{h},$
        and for any scalars $s, t$ in some sufficiently small open neighborhood of $(0,0)$ with respect to the Euclidean topology in $\Rb^2$
        (the neighborhood may depend on the choice of the representatives $g$ and $h$),
        we have
        $D \big(u  \, \big| \,
        h(t) \, , \, g(s) \big) < + \infty,$ and
        $D \big(u  \, \big| \,
        h(t) \, , \, g(s) \big)
        = ts \langle
        \overline{h}, \overline{g} \rangle_{u}
        + o(t^2 + s^2)$ as $(s,t) \to (0,0).$
    \end{enumerate}
\end{defi}

From a codivergence $D(u|v,w)$ that takes finite values on a finite-dimensional manifold and with bilinear expansions domains the tangent spaces, we can always construct a divergence by setting $v = w$. Then $D(u|v, v)$ behaves like a quadratic form in $v$ whenever $v$ is close to $u$. 

\medskip

If $\Xc$ is a $B$-manifold and a closed subspace of a vector space $E$, then the notions of codivergences in Definition~\ref{def:codiv} and Definition~\ref{def:codiv_manifold} coincide. This is because differentiable curves are, in first order, linear functions in a small enough neighborhood of $0.$

\medskip

For both definitions, a given space $\Xc$ and a given family of bilinear expansion domains $(E_{u})_{u \in \Xc}$, the set of codivergences on $\Xc$ is a convex cone.

\medskip

For an example covered by Definition~\ref{def:codiv_manifold} but not by Definition~\ref{def:codiv} assume that $\Xc$ is the unit circle. No codivergence can exist in the sense of Definition~\ref{def:codiv} with non-trivial bilinear expansion domains $(E_u)$. An example of a codivergence on $\Xc = \{ e^{i\theta}, \theta \in \Rb\}$ in the sense of Definition~\ref{def:codiv_manifold} is
\begin{align*}
    D(u | v , w)
    = \begin{cases}
        e^{(\theta_1 - \theta_0) (\theta_2 - \theta_0)} - 1,
        & \textnormal{if } v, w \in U_u, \\
        + \infty, & \textnormal{else,}
    \end{cases}
\end{align*}
where $u, v, w \in \Xc^3,$
$U_u := \{u e^{i\theta}, \, \theta \in (-\pi/2, \pi/2)\},$ 
$u = e^{i\theta_0},$
$v = e^{i\theta_1}$ and
$w = e^{i\theta_2}$
for some $\theta_0 \in \Rb$,
$\theta_1, \theta_2 \in [\theta_0 - \pi, \theta_0 + \pi)$.
Such a representation of $v$ and $w$ always exists and is unique since $[\theta_0 - \pi, \theta_0 + \pi)$ is a half-open interval of length $2 \pi$.
In this case, the tangent space $T_u$ of the circle at any point $u = e^{i \theta_0}$ is diffeomorphic to $\Rb$ and we will use this identification (denoted by the symbol ``$\simeq$''). Let $g,h \in T_u \simeq \Rb$ and assume $s,t \in \Rb$.
Then $g(s) = u e^{igs} \in U_u$ for $s$ small enough.
Similarly, $h(t) = u e^{iht} \in U_u$ for $t$ small enough.
So, $D \big(u  \, \big| \, h(t) \, , \, g(s) \big)$
is finite for all $(s,t)$ in a small enough neighborhood of $(0,0)$, and, whenever this is the case, we have
\begin{align*}
    D \big(u  \, \big| \, h(t) \, , \, g(s) \big)
    = D \big(u  \, \big| \, u e^{iht} \, , \,
    u e^{igs} \big)
    = e^{ht gs} - 1
    = ts \langle h, g \rangle_{u} + o(t^2 + s^2),
\end{align*}
where $\langle h, g \rangle_{u} = gh$ is the local bilinear form
(which in this example is independent of $u$)
and the bilinear expansion domain can be taken to be $E_u = T_u \simeq \Rb$.

\begin{figure}[tb]
    \centering
    \resizebox{!}{4cm}{


    \begin{tikzpicture}
        \shade[left color=gray!10,right color=gray!80] 
        (0, 0) to[out=-10,in=150] (4  , -3) --
        (12,1) to[out=150,in=-10] (5.5,1.7) -- cycle;

        \node at ($(3.5, 0)$) (P_0) {};
        \draw[fill] (P_0) circle (2pt);
        \node[above = of P_0, yshift = -1cm] {$P_0$};
        
        \node at ($(6.7, 0.5)$) (P_1) {};
        \draw[fill] (P_1) circle (2pt);
        \node[above = of P_1, yshift = -1cm] {$P_1$};
        
        \node at ($(6, -1.7)$) (P_2) {};
        \draw[fill] (P_2) circle (2pt);
        \node[above = of P_2, yshift = -1cm] {$P_2$};

        \draw (P_0.center) to [out=30,in=175] (P_1.center);
        \draw (P_0.center) to [out=-10,in=160] (P_2.center);

        \draw [red,thick,domain=-17:26]
        plot ({3.5 + 0.5 * cos(\x)}, {0.5 * sin(\x)}) ;
        
        \node[right = of P_0, xshift = -0.5cm]
        {$D(P_0 |P_1, P_2)$};
    \end{tikzpicture}
    }
    \caption{The codivergence between $P_1$ and $P_2$ at $P_0$ measures the position of $P_1$ and $P_2$ relative to $P_0.$}
    \label{fig:codiv}
\end{figure}
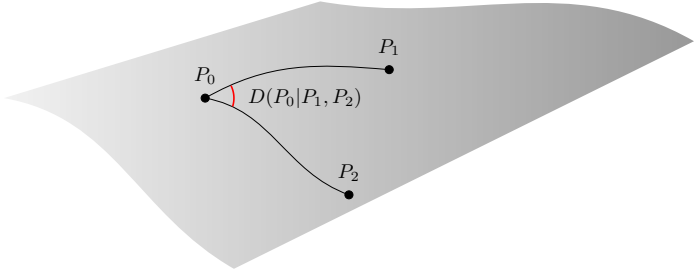

\subsection{Codivergences on the space of probability measures}

For the application to statistics, $E$ is the space of all finite signed measures on a measurable space $(\Ac, \Bc)$, and $\Xc$ is the space of all probability measures on $(\Ac, \Bc)$.
Probability measures form a convex subset of all signed measures $E$. Since $E$ is a vector space, the natural definition of a codivergence on $\Xc$ is Definition~\ref{def:codiv}.
A visual representation of such a codivergence is provided in Figure~\ref{fig:codiv}.

\medskip

In a next step, we characterize the bilinear expansion domains of a codivergence for $\Xc$ the space of probability measures. Given a probability measure $P_0 \in \Xc,$ we say that a function $h: \Xc \to \Rb$ is $P_0$-essentially bounded by a constant $C > 0$ if $P_0(\{x \in \Ac: |h(x)| \leq C \}) = 1$ and define $\EssSup_{P_0}|h| := \inf\{C > 0: \,
|h| \text{ is } P_0\text{-essentially bounded by } C \}.$ We will show that
\begin{align*}
    \Mc_{P_0} := \left\{ \mu \in E: \, 
    \mu \ll P_0, \,
    \int d\mu = 0, \,
    \EssSup_{P_0}\Big|\frac{d\mu}{dP_0}\Big| < + \infty
    \right\}
\end{align*}
is the largest bilinear expansion domain that any codivergence on $\Xc$ can have at $P_0$. The rationale is that $P_0+t\mu$ is otherwise not a probability measure. Indeed if $\mu \in \Mc_{P_0}$ has a density $h$ with respect to $P_0$, then the $P_0$-density $1 + t h$ is non-negative for given $t>0$ if and only if $h$ is larger than $-1/t$. Conversely, the density $1 - t h$ is non-negative for given $t <0$ if and only if $h$ is smaller than $1/t$.
This gives a link between a bound on $h = d\mu/dP_0$ and the non-negativity of the probability measure $P_0 + t \mu.$

\medskip

For every measurable set $A,$
\begin{align}
    (P_0 + t \mu)(A) = \int_{x\in A} d(P_0 + t \mu)(x)
    = \int_{x\in A} \big(1 + th(x)\big) dP_0(x).
\end{align}
The value of an integral is unchanged if the function to be integrated is modified on a $P_0$-null set. Therefore we only need the function $1 + th$ to be positive $P_0$-almost everywhere for $P_0 + t \mu$ to be a positive measure.

\begin{prop}
    For any codivergence $D$ on the space of probability measures $\Xc$, the bilinear expansion domain of $D$ at any probability measure $P_0 \in \Xc$ must be included in $\Mc_{P_0}$.
    Furthermore, every $\mu \in \Mc_{P_0}$ has a density $d\mu / dP_0$ with respect to $P_0$ such that
    $\EssSup_{P_0}|d\mu / dP_0|
    = 1 / a_*$
    with $a_* := \sup \{ a > 0:\, P_0 + t \mu \in \Xc \, \text{for all } t \in [-a , a]\} \in (0, +\infty]$ and the convention $1/+ \infty = 0$.
\label{prop:biggest_tangent_space}
\end{prop}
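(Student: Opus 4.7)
The plan is to reduce both claims to one characterization: for $t\neq 0$, the signed measure $P_0+t\mu$ lies in $\Xc$ if and only if $\int d\mu=0$, $\mu\ll P_0$, and its Radon--Nikodym density $h:=d\mu/dP_0$ satisfies $1+th\geq 0$ $P_0$-almost everywhere. Once this equivalence is in place, both parts of the proposition become a matter of calibrating the admissible range of $t$.

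First I would prove the inclusion $E_{P_0}\subseteq\Mc_{P_0}$. Given $\mu\in E_{P_0}$, Definition~\ref{def:codiv}(iii) applied with $h=g=\mu$ yields an $\epsilon>0$ such that $P_0+t\mu\in\Xc$ for every $t\in(-\epsilon,\epsilon)$. Matching total masses at a single $t\neq 0$ forces $\int d\mu=0$. For any $P_0$-null set $A$, the inequality $(P_0+t\mu)(A)=t\mu(A)\geq 0$ must hold for both small positive and small negative $t$, which pins $\mu(A)=0$; hence $\mu\ll P_0$ and a density $h$ exists. The $P_0$-density of $P_0+t\mu$ equals $1+th$, and requiring $1+th\geq 0$ $P_0$-a.e.\ for all $t\in(-\epsilon,\epsilon)$ forces $\EssSup_{P_0}|h|\leq 1/\epsilon<\infty$, so $\mu\in\Mc_{P_0}$.

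Next I would derive the identity $\EssSup_{P_0}|d\mu/dP_0|=1/a_*$. Given $\mu\in\Mc_{P_0}$ with density $h$ and $M:=\EssSup_{P_0}|h|$, the same non-negativity analysis, run in the forward direction, shows that $P_0+t\mu\in\Xc$ for every $t\in[-a,a]$ if and only if $aM\leq 1$. The convention $1/0=+\infty$ handles the degenerate case $h=0$, in which $P_0+t\mu=P_0$ for every $t\in\Rb$. Taking the supremum over admissible $a$ then yields $a_*=1/M$, which is the claim.

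The hard part will be the logical step that upgrades ``$1+th\geq 0$ $P_0$-a.e.\ for every $t$ in an interval'' to the uniform bound ``$|h|\leq 1/\epsilon$ $P_0$-a.e.'', since each $t$ a priori carries its own exceptional $P_0$-null set. The standard remedy is to restrict to $t$ in a countable dense subset of the interval, take the union of the countably many null sets, and then use continuity of $t\mapsto 1+th(x)$ at every good $x$ to extend the bound to all $t$ in the interval. Everything else is a routine manipulation of Radon--Nikodym densities.
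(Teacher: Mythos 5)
Your proof is correct, and its skeleton matches the paper's: obtain from Definition~\ref{def:codiv}(iii) a symmetric interval of $t$ with $P_0+t\mu\in\Xc$, deduce $\int d\mu=0$, absolute continuity, and the density bound, then calibrate the admissible range of $t$ to get $\EssSup_{P_0}|h|=1/a_*$. Where you genuinely diverge is the absolute-continuity step: the paper invokes the Lebesgue decomposition and then the Jordan decomposition of the singular part, exhibiting a set $U$ on which $(P_0+a\mu)(U)=-a\alpha_-\mu_{S,-}(U)$, whereas you simply observe that $(P_0+t\mu)(A)=t\mu(A)\ge 0$ for both signs of $t$ on every $P_0$-null set $A$, forcing $\mu(A)=0$. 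Your route is more elementary and arguably cleaner (it only needs the standard fact that vanishing on all $P_0$-null sets implies $|\mu|\ll P_0$, since subsets of null sets are null). For the essential-supremum bound the paper is slightly slicker: it fixes the single endpoint $t=a$, sets $A_+^C=\{1+ah<0\}$, and notes that $(P_0+a\mu)(A_+^C)$ is both $\ge 0$ (a probability measure) and $\le 0$ (the integral of a negative function), so $P_0(A_+^C)=0$; this avoids the continuum-of-null-sets issue you correctly flag and resolve via a countable dense set of $t$'s. Both remedies work. One caution: your opening ``characterization'' is stated as an equivalence for a \emph{single} $t\neq 0$, and in that form the ``only if'' direction is false --- e.g.\ $\mu=\delta_x-Q$ with $x$ outside the support of $P_0$ and $Q\ll P_0$ with bounded density gives $P_0+t\mu\in\Xc$ for small $t>0$ without $\mu\ll P_0$. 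Your actual argument never uses the single-$t$ version (you always exploit both signs of $t$), so this is an imprecision of phrasing rather than a gap, but the statement should be corrected to quantify over a symmetric set of $t$'s.
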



\begin{proof}[Proof of Proposition~\ref{prop:biggest_tangent_space}]
    We begin by proving the first part.
    Let $\mu$ be a finite signed measure belonging to the bilinear expansion domain at $P_0$ of some codivergence $D$ on the space of probability measures $\Xc$.
    For $a > 0$, we write $\mu \in R(a)$ if and only if $P_0 + t \mu \in \Xc$ for all $-a\leq t\leq a.$
    Since $\mu$ belongs to the bilinear expansion domain of $D$ at $P_0$, Definition~\ref{def:codiv}(iii) implies existence of an open neighborhood $T$ of $0$ such that for any $t \in T$, $P_0 + t \mu \in \Xc$. Therefore, there exists $a > 0$ with $\mu\in R(a)$.
    
    \medskip

    We now show that $\mu \in R(a),$ for some $a>0,$ implies $\mu \ll P_0$. The proof relies on the Jordan decomposition theorem for finite signed measures (e.g.\ Corollary 4.1.6 in \cite{MR3098996}). It
    states that every finite signed measure $\mu$ on a measurable space $(\Ac, \Bc)$ can be decomposed as
    \begin{align}
        \mu = \alpha_+ \mu_+ - \alpha_- \mu_-, \quad \text{with} \ \alpha_+, \alpha_- \geq 0,
        \label{Jordan_decomposition_theorem}
    \end{align}
    and $\mu_-, \mu_+$ orthogonal probability measures on $(\Ac, \Bc)$. By the Lebesgue decomposition theorem (see Theorem 4.3.2 in \cite{MR3098996}),
    $\mu$ can always be decomposed as $\mu = \mu_A + \mu_S$, where $\mu_A$ is a signed measure that is absolutely continuous with respect to $P_0$, $\mu_S$ is a signed measure that is singular with respect to $P_0$,
    and $\mu_A$ and $\mu_S$ are orthogonal.
    By the Jordan decomposition \eqref{Jordan_decomposition_theorem}, we decompose the signed measure
    $\mu_S = \alpha_+ \mu_{S,+} - \alpha_- \mu_{S,-}$
    into its positive and negative part $\mu_{S,+}$ and $\mu_{S,-}$. These two measures are orthogonal and $\alpha_+, \alpha_- \geq 0.$
    Then, $P_0 + a \mu = P_0 + \, a \mu_A + a \alpha_+ \mu_{S,+} - a \alpha_- \mu_{S,-}$ can be a probability measure only if $\alpha_- = 0$. This is because we can find a set $U$ such that $P_0(U) = \mu_A(U) = \mu_{S,+}(U) = 0$ and $\mu_{S,-}(U) = 1$.
    Therefore $(P_0 + a \mu)(U)
    = - a \alpha_- \mu_{S,-}(U) = - a \alpha_- \leq 0$.
    In the same way, $P_0 - a \mu$ can be a probability measure only if $\alpha_+ = 0$.
    Therefore, if $\mu \in R(a)$ for some $a>0$, then $\alpha_+ = \alpha_- = 0$, and $\mu = \mu_A$ is absolutely continuous with respect to $P_0$.

    \medskip

    Let $h$ be the density of $\mu$ with respect to $P_0$.
    Then
    \begin{align*}
        \frac{d(P_0 + t \mu)}{dP_0}
        = 1 + t \frac{d\mu}{dP_0} = 1 + t h.
    \end{align*}
    Note that $P_0 + t \mu$ is a signed measure integrating to $1$ if and only if $\int d\mu = \int h \, dP_0 = 0$.

    \medskip
    
    We now show that, for any $a > 0$, $\mu \in R(a)$ implies $\EssSup_{P_0} |h| \leq 1/a$. If $\mu \in R(a)$, then for any
    $A \in \Bc,$ $(P_0 + a \mu)(A) \geq 0$
    and $(P_0 - a \mu)(A) \geq 0$.
    Let us define the sets $A_+ := \{ x \in \Ac: \, 1 + a h(x) \geq 0\}$ and $A_- := \{ x \in \Ac: \, 1 - a h(x) \geq 0\}$.
    Let $A^C$ denote the complement of a set $A$.
    We have $(P_0 + a \mu)(A_+^C) = \int_{A_+^C} 1 + a h(x) dP_X(x) \leq 0$ since this is the integral of a negative function.
    Therefore $P_0(A_+^C) = 0$ and then $P_0(A_+) = 1$.
    Similarly, $(P_0 + a \mu)(A_-^C) = \int_{A_-^C} 1 - a h(x) dP_X(x) \leq 0.$ Hence, $P_0(A_-^C) = 0$ and $P_0(A_-) = 1$.

    \medskip

    Therefore, $P_0(A_+ \cap A_-) = 1$.
    This means that for $P_0$-almost every $x \in \Ac$, $1 + a h(x) \geq 0$ and $1 - a h(x) \geq 0$.
    Therefore, for $P_0$-almost every $x \in \Ac$, $|h(x)| \leq 1/a$.
    Therefore, $h$ is $P_0$-essentially bounded by $C := 1/a$.
    We have finally shown that $\mu\in R(a)$ implies $\EssSup_{P_0} |h| \leq 1/a$ and $\mu \in \Mc_{P_0}$, proving the first part of Proposition~\ref{prop:biggest_tangent_space}.

    \medskip

    Conversely, note that $\mu \in \Mc_{P_0}$ is a sufficient condition for $P_0 + t \mu$ to be a probability measure for all $t$ in a sufficiently small open neighborhood of $0$.

    \medskip
    
    We now show the second part of Proposition~\ref{prop:biggest_tangent_space}.
    Remember that $a_* := \sup \{ a > 0: \, \mu \in R(a) \} \in (0, + \infty]$.
    Let $(a_n)_{n \in \NN}$ be an increasing sequence of real numbers strictly smaller than $a_*$ and converging to $a_*$.
    For every positive integer $n$, we have $\mu \in R(a_n)$.
    Therefore, by the previous reasoning, $\EssSup_{P_0} |h| \leq 1/a_n$, meaning that $P_0(\{ x \in \Ac: \, |h(x)| \leq 1/a_n \}) = 1$.
    By a union bound, we obtain $P_0( \cap_{n \geq 0} \{ x \in \Ac: \, |h(x)| \leq 1/a_n \}) = 1$.
    Therefore $P_0( \{ x \in \Ac: \, |h(x)| \leq 1/a_* \}) = 1$, and by definition $\EssSup_{P_0}|h| \leq 1/ a_*$.

    \medskip

    We now show the reverse version of this inequality. Let $C > \EssSup_{P_0}|h|$.
    Then $P_0(\{x \in \Ac: \, |h(x)| \leq C\} = 1$.
    Hence, for any $t \in [-1/C, 1/C]$, and for $P_0$-almost every $x$, $-1 \leq t h(x) \leq 1$.
    Consequently, for any $t \in [-1/C, 1/C]$, and for $P_0$-almost every $x$,
    $1 + t h(x) \geq 0$ and $1 - t h(x) \geq 0$. For any $t \in [-1/C, 1/C]$, $P_0 + t \mu$ is a finite signed measure with a density that is non-negative $P_0$-almost everywhere and integrates to $1$.
    These are sufficient conditions for $P_0 + t \mu$ to be a probability measure on $\Ac$,
    proving $\mu \in R(1/C)$.
    Therefore, $1/C \leq a_*$ and thus $1/a_* \leq C$. This holds for any $C > 0$ such that $C > \EssSup_{P_0}|h|$, proving that $1/a_* \leq \EssSup_{P_0}|h|$.
    Together with the inequality $\EssSup_{P_0}|h| \leq 1/ a_*$, the claim $1/a_* = \EssSup_{P_0}|h|$ follows.
\end{proof}

\subsection{Examples of codivergences}

For any real $a\geq 0,$ set $a/0:=+\infty.$
For a non-negative function $\phi:[0,\infty)\to [0,\infty),$ we can define two codivergences. The first one will be referred to as covariance-type codivergence between three probability measures $P_0, P_1, P_2$ and is defined by
\begin{align}
    \Vphi(P_0 | P_1, P_2) := \int \phi\Big(\frac{dP_1}{dP_0}\Big)\phi\Big(\frac{dP_2}{dP_0}\Big) dP_0-
    \int \phi\Big(\frac{dP_1}{dP_0}\Big)dP_0 \int \phi\Big(\frac{dP_2}{dP_0}\Big) dP_0,
\label{eq:def:Vphi}
\end{align}
and the second one will be called correlation-type codivergence and is defined by
\begin{align}
    \Rphi(P_0 | P_1, P_2)
    := \frac{\Vphi(P_0 | P_1, P_2)}{\int \phi\big(\frac{dP_1}{dP_0}\big)dP_0 \int \phi\big(\frac{dP_2}{dP_0}\big) dP_0}
    =\frac{
    \int \phi\big(\frac{dP_1}{dP_0}\big)\phi\big(\frac{dP_2}{dP_0}\big) dP_0}{\int \phi\big(\frac{dP_1}{dP_0}\big)dP_0 \int \phi\big(\frac{dP_2}{dP_0}\big) dP_0} - 1,
\label{eq:def:Rphi}
\end{align}
if $P_1, P_2 \ll P_0.$
Otherwise, we define both $\Vphi(P_0 | P_1, P_2)$ and $\Rphi(P_0 | P_1, P_2)$ to be equal to $+ \infty$.

\medskip

Obviously, both codivergences $\Vphi$ and $\Rphi$ are symmetric in $P_1$ and $P_2$.
By Jensen's inequality we see that $\Vphi(P_0 | P_1, P_1)\geq 0$ and
$\Rphi(P_0 | P_1, P_1)\geq 0$.
If $\phi(1)=0,$ then $\Rphi(P_0|P_0,P_0) = +\infty.$
For $\phi(1)>0,$ the functions $\phi$ and $t\phi$ with positive scalar $t$ give the same codivergence $\Rphi$ and simply rescale $\Vphi$. Without loss of generality, we therefore can (and will) assume  that $\phi(1)=1.$

\medskip

We say that a function $f$ admits a second order Taylor expansion around $1$ if $f(1+y) = f(1) + y f'(1) +\frac{y^2}{2}f''(1) + o(y^2)$
for all $y$ in an open neighborhood of zero.
The following proposition is proved in Section~\ref{proof:prop:codiv}.

\begin{prop}
    Assume that $\phi(1) = 1$ and $\phi$ admit a second order Taylor expansion around $1$.
    Then the $\Vphi$ and the $\Rphi$ codivergences are codivergences in the sense of Definition \ref{def:codiv} with bilinear expansion domains $\Mc_{P_0}$ and bilinear maps $\phi'(1)^2\langle \mu, \wt \mu \rangle_{P_0},$ where
    \begin{align*}
        \langle \mu, \wt \mu \rangle_{P_0}
        := \int \frac{d \mu}{dP_0} d\wt \mu
        = \int h g \, dP_0,
        \quad \text{with} \ h = \frac{d\mu}{dP_0} \ \text{and} \ g = \frac{d\wt \mu}{dP_0}.
    \end{align*}
\label{prop:codiv}
\end{prop}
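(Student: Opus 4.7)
The plan is to verify conditions (i)--(iii) of Definition~\ref{def:codiv} for both $\Vphi$ and $\Rphi$. Conditions (i) and (ii) are immediate: symmetry in $P_1, P_2$ is apparent from \eqref{eq:def:Vphi}--\eqref{eq:def:Rphi}, and the identity
\begin{align*}
\Vphi(P_0 \,|\, P_1, P_1) = \Var_{P_0}\big(\phi(dP_1/dP_0)\big) \geq 0
\end{align*}
gives (ii) for $\Vphi$, with equality when $P_1 = P_0$ because the normalization $\phi(1) = 1$ makes the integrand constant; the corresponding statement for $\Rphi$ follows, since its denominator equals $1$ when $P_1 = P_2 = P_0$.

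The substance is in (iii). Fix $\mu, \wt\mu \in \Mc_{P_0}$ with $P_0$-densities $h, g$. By Proposition~\ref{prop:biggest_tangent_space} these densities are $P_0$-essentially bounded, so $P_0 + t\mu$ and $P_0 + s\wt\mu$ are probability measures for $(s,t)$ in some neighborhood of $(0,0)$, with $P_0$-densities $1+th$ and $1+sg$. I would start from the second-order Taylor expansion of $\phi$ at $1$,
\begin{align*}
\phi(1+y) = 1 + \phi'(1)\, y + \tfrac{1}{2}\phi''(1)\, y^2 + y^2 \epsilon(y), \qquad \epsilon(y) \to 0 \text{ as } y \to 0,
\end{align*}
substitute $y = th$ and $y = sg$, expand the product $\phi(1+th)\phi(1+sg)$, and integrate against $P_0$. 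The defining constraint $\int h\, dP_0 = \int g\, dP_0 = 0$ kills every term linear in $t$ alone or $s$ alone; the diagonal quadratic pieces $\tfrac{t^2}{2}\phi''(1) h^2$ and $\tfrac{s^2}{2}\phi''(1) g^2$ appear identically inside $\int \phi(1+th)\phi(1+sg)\, dP_0$ and inside the product $\int \phi(1+th)\, dP_0 \cdot \int \phi(1+sg)\, dP_0$, hence cancel in $\Vphi$. The only surviving main contribution is the cross term
\begin{align*}
ts\, \phi'(1)^2 \int hg\, dP_0 \;=\; ts\, \phi'(1)^2\, \langle \mu, \wt\mu\rangle_{P_0},
\end{align*}
as claimed.

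The main obstacle is showing that everything beyond this main term aggregates to $o(t^2 + s^2)$ once integrated. The residual contributions are of three kinds: Taylor-type errors $(th)^2 \epsilon(th)$ and $(sg)^2 \epsilon(sg)$; polynomial higher-order monomials such as $t^3 h^3$ or $t^2 s\, h^2 g$ produced by cross-multiplying the quadratic pieces of the two expansions; and products of a Taylor error with another factor. Here the essential boundedness of $h$ and $g$ built into $\Mc_{P_0}$ is decisive: it forces $th, sg \to 0$ uniformly $P_0$-a.e., so $\epsilon(th), \epsilon(sg) \to 0$ uniformly, while every power $h^j g^k$ lies in $L^1(P_0)$ because $P_0$ is a probability measure. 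Dominated convergence then delivers $o(t^2 + s^2)$ for each residual type.

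Finally, $\Rphi$ reduces to $\Vphi$ by a short multiplicative argument. Specializing the expansion above at $\wt\mu = \mu$, $s = t$ yields $\int \phi(1+th)\, dP_0 = 1 + O(t^2)$, so the denominator in \eqref{eq:def:Rphi} equals $1 + O(t^2 + s^2)$ and in particular is bounded away from $0$. Since $\Vphi = O(t^2 + s^2)$ in this neighborhood, dividing by that denominator perturbs $\Vphi$ only by $O\big((t^2+s^2)^2\big) = o(t^2+s^2)$, so $\Rphi$ inherits the bilinear expansion $ts\, \phi'(1)^2\, \langle\mu, \wt\mu\rangle_{P_0} + o(t^2+s^2)$. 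Both $\Vphi$ and $\Rphi$ are finite on this neighborhood because $1+th$ and $1+sg$ are bounded and strictly positive there, so the underlying integrals are finite.
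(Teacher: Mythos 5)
Your proposal is correct and follows essentially the same route as the paper's proof: a second-order Taylor expansion of $\phi$ at $1$, cancellation of the linear terms via $\int h\,dP_0=\int g\,dP_0=0$ and of the diagonal quadratic terms between the joint integral and the product of marginal integrals, with the essential boundedness of $h,g$ from $\Mc_{P_0}$ controlling the remainders, and $\Rphi$ obtained from $\Vphi$ by dividing by a denominator of the form $1+O(t^2+s^2)$. Your treatment of the error terms via uniform smallness of $\epsilon(th),\epsilon(sg)$ is slightly more explicit than the paper's, but the argument is the same.
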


If $\nu$ is a measure dominating $P_0$, the bilinear map can be written as 
\begin{align}
    \langle \mu, \wt \mu \rangle_{P_0} = \int \frac{h g}{p_0} \, d\nu, \quad \text{for the densities} \ h = \frac{d\mu}{d\nu}, \ g = \frac{d\wt \mu}{d\nu}, \  p_0 = \frac{dP_0}{d\nu}.  
    \label{eq.P0_inner_prod_rewrite}
\end{align} 

A consequence of Proposition~\ref{prop:codiv} is the following: locally, all $\Vphi$ and $\Rphi$ codivergences (that satisfies the regularity conditions) define the same structure.
This scalar product is the nonparametric Fisher information metric.
The name originates from the identity \cite[Equation (8)]{holbrook2017nonparametric}
\begin{align}
    [I(\theta)]_{ij} = \int
    \frac{p_i(x | \theta) p_j(x | \theta)}{p(x | \theta)} d\nu(x),
\label{eq:Fisher_information}
\end{align}
where $[I(\theta)]_{ij}$ is the $(i,j)$-th entry of the Fisher information matrix for a parametric model of $\nu$-densities $p( \cdot | \theta)$ indexed by a finite dimensional parameter $\theta$ and
$p_i(x | \theta) := \partial p(x | \theta) / \partial\theta_i$. Equation~\eqref{eq:Fisher_information} and Equation~\eqref{eq.P0_inner_prod_rewrite} have the same structure.
One of the earliest reference to the nonparametric Fisher information metric is \cite{dawid1977further}.
The concept has been applied in several frameworks, such as computer vision~\cite{srivastava2007riemannian} or shape data analysis \cite{srivastava2016functional}. The geometry of the nonparametric Fisher information metric has been studied by \cite{chen2015geometric, holbrook2017nonparametric} in the context of Bayesian inference.

\medskip

An interesting subclass of codivergences is obtained by choosing 
$\phi_\alpha(x)
= x^\alpha.$
To ease the notation, we set
\begin{align}
    \Valpha := V_{\phi_\alpha} \quad \text{and} \ \     \Ralpha := R_{\phi_\alpha}.
\end{align}
Although the resulting codivergences seem related to the well-known R\'enyi divergence $(1-\alpha)^{-1} \log(\int p(x)^\alpha q(x)^{1-\alpha} \, d\nu(x)) $ between probability measures $P$ and $Q$ with densities $p$ and $q$ \cite{renyi1961measures}, the term
$\int (p_1(x)p_2(x))^\alpha p_0(x)^{1-2\alpha} d\nu(x)$
occurring in the definitions of $\Valpha$ and $\Ralpha$ is of a different nature.

\medskip

In the case $\alpha = 1$, that is, $\phi(x) = x,$ both notions of codivergence agree. Denoting by $p_0, p_1, p_2$ the respective $\nu$-densities of $P_0, P_1, P_2,$ where $\nu$ is a measure dominating $P_0$, the corresponding codivergence
\begin{align*}
    \chi^2(P_0 | P_1, P_2) :=
    \begin{cases} \displaystyle
    \int \frac{dP_1}{dP_0} dP_2 - 1
    = \int \frac{p_1 p_2}{p_0} d\nu - 1,
    & \text{if }
    P_1 \ll P_0 \text{ and } P_2 \ll P_0, \\
    + \infty, & \text{else,}
    \end{cases}
\end{align*}
will be called $\chi^2$-codivergence. The (usual) $\chi^2$-divergence is defined as $\chi^2(P,Q) := \int (dP/dQ-1)^2 dQ= \int (dP/dQ)^2 dQ-1$, if $P$ is dominated by $Q$ and $+\infty$ otherwise.
Therefore, the $\chi^2$-codivergence $\chi^2(P_0 | P_1, P_1)$
coincides with the usual $\chi^2$-divergence $\chi^2(P_1, P_0)$ for any $P_0$ and $P_1$.

\medskip

Another interesting codivergence is $\Ralpha$ with $\alpha = 1/2$. The resulting codivergence  
\begin{align}
    \rho(P_0 | P_1, P_2) :=
    \dfrac {\int \sqrt{p_1 p_2}  \, d\nu}{
    \int \sqrt{p_1 p_0} \, d\nu 
    \int \sqrt{p_2 p_0} \, d\nu} 
    - 1,
    \label{eq:def:rho}
\end{align}
is called Hellinger codivergence. We can (and will) define the Hellinger codivergence as $\int \sqrt{p_1 p_2} \, d\nu/(\int \sqrt{p_1 p_0} \, d\nu \int \sqrt{p_2 p_0} \, d\nu)$ whenever the denominator is positive.
This is considerably weaker than $P_1,P_2 \ll P_0,$ as it is only required that the support of $p_0$ intersects with non-zero $\nu$-mass the support of $p_1$ and the support of $p_2$.
%
%
%
Note that $\rho(P_0 | P_1, P_2)$ is independent of the choice of the dominating measure $\nu$
(and potentially $+\infty$ if the denominator is $0$). 

\medskip

The name Hellinger codivergence is motivated by the representation
\begin{align*}
    \rho(P_0 | P_1, P_2)
    = \frac{\alpha(P_1, P_2)}{\alpha(P_0, P_1) \alpha(P_0, P_2)} - 1,
\end{align*}
where $\alpha(P,Q) := \int \sqrt{pq} \, d\nu$ is the Hellinger affinity between two positive measures $P, Q$ with densities $p, q$ taken with respect to a common dominating measure.

\medskip

The $\chi^2$- and Hellinger codivergence are of interest as they can be used to control changes of expectation between probability measures, see Section 2.2 of \cite{2022lowerBoundsBV}.

\medskip

We always have 
\begin{align}
    \rho(P_0 | P_1, P_1) \leq \chi^2(P_0 | P_1, P_1).
    \label{eq.rho_chi2}
\end{align}
To see this, observe that H\"older's inequality with $p=3/2$ and $q=3$ gives for any non-negative function $f,$ $1=\int p_1\leq (\int f^{3/2} p_1)^{2/3}(\int f^{-3}p_1)^{1/3}.$ The choice $f=(p_0/p_1)^{1/3}$ yields $1\leq (\int \sqrt{p_1p_0})^2 \int p_1^2/p_0.$ Therefore $1 / (\int \sqrt{p_1p_0})^2 \leq \int p_1^2/p_0.$ Subtracting one on each side of this expression yields \eqref{eq.rho_chi2}.

\medskip 

Proposition~\ref{prop:codiv} implies that the $\chi^2$-codivergence and the Hellinger codivergence are codivergences with respective bilinear maps $\langle \mu, \wt \mu \rangle_{P_0}$ for the $\chi^2$-codivergence and $\langle \mu, \wt \mu \rangle_{P_0} / 4$ for the Hellinger codivergence.

\medskip

For the Hellinger codivergence, the expansion in Proposition~\ref{prop:codiv} can be generalized.
Assume that $P_0$ is dominated by some positive measure $\nu$. Define $\support(\mu) := \{ x \in \Ac: d\mu/d\nu(x) \neq 0\}$ for any signed measure $\mu$ dominated by $\nu.$ If $\mu_1$ and $\mu_2$ are signed measures dominated by $\nu$ such that (i) $\support(\mu_i) \cap \support(P_0)$ has a positive $\nu$-measure, and (ii) their densities $h_i$ are positive on $\support(\mu_i) \backslash \support(P_0)$, then
\begin{align}
    \rho(P_0 | P_0 + t \mu_1, P_0 + s \mu_2)
    &= \sqrt{t s} \int_{\support(P_0)^C} \sqrt{h_1 h_2} \, d\nu \nonumber \\
    &+ t s \int_{\support(P_0)} \frac{h_1 h_2}{2 p_0} \, d\nu
    + o(t^2 + s^2).
    \label{eq:general_expansion_rho}
\end{align}
Compared to Definition~\ref{def:codiv} (iii), there is thus an additional term for probability measures that have mass outside of the support of $P_0$. Consequently, this expansion cannot be linked to one local bilinear form and the mapping $(t, s) \in \Rb_+^2 \mapsto \rho(P_0 | P_0 + t \mu_1, P_0 + s \mu_2)$ is not differentiable at $(0,0)$. This is in line with Proposition~\ref{prop:biggest_tangent_space}: for perturbations $\mu$ that do not belong to $\Mc_{P_0}$, the measures $P_0 + t \mu$ cannot be probability measures for all $t$ in any open neighborhood of $0$. 

\medskip

The $\Ralpha$ codivergences admit convenient expressions for product measures and for exponential families.
The first proposition is proved in Section~\ref{proof:prop:phi_alpha_product}.

\begin{prop}
    Let $P_{j\ell}$ be probability measures
    for any $j=0, 1, 2$ and for any $\ell = 1, \dots, d$ satisfying $P_{1\ell}, P_{2\ell} \ll P_{0\ell}$.
    Then
    \begin{align*}
        \RalphaBigg{
        \bigotimes_{\ell=1}^d P_{0\ell}}{
        \bigotimes_{\ell=1}^d P_{1\ell}}{
        \bigotimes_{\ell=1}^d P_{2\ell}} 
        = \prod_{\ell=1}^d
        \Big( \Ralpha (P_{0\ell} | P_{1\ell}, P_{2\ell} ) + 1\Big) - 1.
    \end{align*}
    \label{prop:phi_alpha_product}
\end{prop}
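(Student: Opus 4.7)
The plan is to reduce the claim to two elementary ingredients: (i) Radon-Nikodym derivatives of product measures factorize, and (ii) $\phi_\alpha(x) = x^\alpha$ is multiplicative, i.e.\ $\phi_\alpha(\prod_\ell y_\ell) = \prod_\ell \phi_\alpha(y_\ell)$. Combining these with Tonelli's theorem will split every integral appearing in the definition of $R_\alpha$ into a product of one-dimensional integrals, after which the identity falls out by direct computation.

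First I would set $p_{j\ell} := dP_{j\ell}/dP_{0\ell}$ for $j=1,2$ and $\ell=1,\dots,d$, and invoke the standard identity
\begin{align*}
\frac{d \bigotimes_{\ell=1}^d P_{j\ell}}{d \bigotimes_{\ell=1}^d P_{0\ell}}(x_1,\dots,x_d) = \prod_{\ell=1}^d p_{j\ell}(x_\ell),
\end{align*}
which holds because each $P_{j\ell}\ll P_{0\ell}$. Applying $\phi_\alpha$ coordinatewise yields $\prod_{\ell=1}^d p_{j\ell}(x_\ell)^\alpha$. Since all integrands are non-negative, Tonelli's theorem gives
\begin{align*}
\int \prod_{\ell=1}^d p_{j\ell}^\alpha \, d\!\bigotimes_{\ell=1}^d P_{0\ell} = \prod_{\ell=1}^d \int p_{j\ell}^\alpha \, dP_{0\ell},
\end{align*}
and the analogous factorization with $p_{1\ell}^\alpha p_{2\ell}^\alpha$ replacing $p_{j\ell}^\alpha$. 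Substituting into \eqref{eq:def:Rphi} with $\phi=\phi_\alpha$, the left-hand side plus one becomes
\begin{align*}
\prod_{\ell=1}^d \frac{\int p_{1\ell}^\alpha p_{2\ell}^\alpha \, dP_{0\ell}}{\int p_{1\ell}^\alpha \, dP_{0\ell} \cdot \int p_{2\ell}^\alpha \, dP_{0\ell}} = \prod_{\ell=1}^d \bigl( \Ralpha(P_{0\ell} | P_{1\ell}, P_{2\ell}) + 1 \bigr),
\end{align*}
and subtracting one gives the claim.

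There is essentially no serious obstacle. The only points that deserve brief attention are: (a) justifying Tonelli, which is immediate from non-negativity of the integrands; (b) ensuring the denominators $\int p_{j\ell}^\alpha \, dP_{0\ell}$ do not vanish, which is automatic for $\alpha\geq 0$ since $p_{j\ell}>0$ on a set of positive $P_{0\ell}$-measure (otherwise $P_{j\ell}$ would not be a probability measure); and (c) reading the identity in the extended nonnegative reals $[0,+\infty]$, so that it remains meaningful when individual factors are infinite, in agreement with the convention adopted after \eqref{eq:def:Rphi}.
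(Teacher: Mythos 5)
Your proposal is correct and follows essentially the same route as the paper: factorize the Radon--Nikodym derivative of the product measures, use the multiplicativity of $x^\alpha$, and apply Fubini/Tonelli to split each integral into a product of one-dimensional integrals. The additional remarks on non-vanishing denominators and the extended-real-valued reading are sensible but do not change the argument.
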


\begin{prop}\label{prop.exponential_family}
Let $\Theta$ be a subset of a real vector space and let $(P_\theta:\theta\in \Theta)$ be an exponential family with $\nu$-densities $p_\theta(x)=h(x)\exp(\theta^\top T(x)-A(\theta))$ for some dominating measure $\nu$.
Then, for any $\theta_0,\theta_1,\theta_2\in \Theta$
satisfying
\begin{align}
    \theta_0 + \alpha\big(\theta_1+\theta_2-2\theta_0\big), \,
    \theta_0 + \alpha\big(\theta_1-\theta_0\big), \,
    \theta_0 + \alpha\big(\theta_2-\theta_0\big) \in \Theta,
    \label{eq:cond:theta_expfamily}
\end{align}
we have
\begin{align*}
    \Ralpha(P_{\theta_0}|P_{\theta_1},P_{\theta_2})=
    &\exp\Big(A\big(\theta_0+\alpha\big(\theta_1+\theta_2-2\theta_0\big)\big)
    - A\big(\theta_0+\alpha\big(\theta_1-\theta_0\big)\big) \\
    &\quad\quad - A\big(\theta_0+\alpha\big(\theta_2-\theta_0\big)\big)
    + A\big(\theta_0\big)\Big)-1.
\end{align*}
\end{prop}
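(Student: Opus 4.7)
\textbf{Proof sketch of Proposition~\ref{prop.exponential_family}.} The plan is to compute the three integrals appearing in the definition \eqref{eq:def:Rphi} of $R_\alpha$ (with $\phi_\alpha(x) = x^\alpha$) in closed form, by recognizing each of them as the normalizing integral of an exponential family at a suitably shifted parameter, and then to collect the $A(\theta_i)$ terms.

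The main identity I will use is that, for any $\eta \in \Theta$, the fact that $p_\eta$ is a probability density gives
\begin{equation*}
    \int h(x) \exp\big(\eta^\top T(x)\big) \, d\nu(x) = \exp\big(A(\eta)\big).
\end{equation*}
First I rewrite the numerator in $\nu$-density form:
\begin{equation*}
    \int \phi_\alpha\Big(\tfrac{dP_{\theta_1}}{dP_{\theta_0}}\Big) \phi_\alpha\Big(\tfrac{dP_{\theta_2}}{dP_{\theta_0}}\Big) dP_{\theta_0}
    = \int p_{\theta_1}^{\alpha} p_{\theta_2}^{\alpha} p_{\theta_0}^{1-2\alpha} \, d\nu.
\end{equation*}
Plugging in the exponential-family form of the densities, the integrand becomes $h(x) \exp\big((\theta_0+\alpha(\theta_1+\theta_2-2\theta_0))^\top T(x)\big)$ multiplied by the $x$-independent factor $\exp(-\alpha A(\theta_1) - \alpha A(\theta_2) - (1-2\alpha)A(\theta_0))$. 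By the first part of \eqref{eq:cond:theta_expfamily}, the shifted parameter lies in $\Theta$, so the identity above evaluates the integral to $\exp(A(\theta_0+\alpha(\theta_1+\theta_2-2\theta_0)))$.

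Exactly the same manipulation, applied to $\int p_{\theta_i}^\alpha p_{\theta_0}^{1-\alpha}\, d\nu$ for $i=1,2$, shows that each of the two denominator integrals equals $\exp(A(\theta_0+\alpha(\theta_i-\theta_0)) - \alpha A(\theta_i) - (1-\alpha)A(\theta_0))$, which is well-defined thanks to the remaining two conditions in \eqref{eq:cond:theta_expfamily}. Taking the ratio and using the identity $-(1-2\alpha) + 2(1-\alpha) = 1$, one finds that the $\alpha A(\theta_1)$ and $\alpha A(\theta_2)$ contributions cancel and the $A(\theta_0)$ contributions collapse to a single $+A(\theta_0)$, leaving
\begin{equation*}
    R_\alpha(P_{\theta_0}|P_{\theta_1},P_{\theta_2}) + 1
    = \exp\!\Big(A\big(\theta_0 + \alpha(\theta_1+\theta_2-2\theta_0)\big) - A\big(\theta_0 + \alpha(\theta_1-\theta_0)\big) - A\big(\theta_0 + \alpha(\theta_2-\theta_0)\big) + A(\theta_0)\Big),
\end{equation*}
which is the claimed formula after subtracting $1$.

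There is no real obstacle here: the argument is purely algebraic once one recognizes the three integrals as partition functions. The only point that requires the hypothesis \eqref{eq:cond:theta_expfamily} is to ensure that each of the three shifted parameters $\theta_0+\alpha(\theta_1+\theta_2-2\theta_0)$, $\theta_0+\alpha(\theta_1-\theta_0)$ and $\theta_0+\alpha(\theta_2-\theta_0)$ lies in $\Theta$, so that $A$ is defined (and finite) at those points and the partition-function identity applies; the only bookkeeping step is verifying that the coefficients of $A(\theta_0)$, $A(\theta_1)$ and $A(\theta_2)$ simplify to the stated exponent.
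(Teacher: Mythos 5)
Your proposal is correct and follows essentially the same route as the paper's own proof: rewrite each of the three integrals as $\int h(x)\exp(\eta^\top T(x))\,d\nu(x)=\exp(A(\eta))$ at the shifted parameters guaranteed to lie in $\Theta$ by \eqref{eq:cond:theta_expfamily}, then cancel the $A(\theta_1)$, $A(\theta_2)$ terms and collect the $A(\theta_0)$ coefficient via $-(1-2\alpha)+2(1-\alpha)=1$. Your sign bookkeeping for the prefactors is in fact the consistent one (the paper's intermediate displays carry a sign slip on the $A(\theta_0)$ terms that cancels out in the final formula), so nothing further is needed.
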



This proposition is proved in Section~\ref{proof:prop.exponential_family}. \eqref{eq:cond:theta_expfamily} is satisfied if $\Theta$ is a vector space or if $0<\alpha \leq 1$ and $\Theta$ is convex. In the case of the Gamma distribution the parameter space is $\Theta = (-1, +\infty) \times (- \Rb)$ and in this case the constraints in  \eqref{eq:cond:theta_expfamily} are sufficient and necessary for the statement of Proposition~\ref{prop.exponential_family} to hold, see Section~\ref{sec:computation_Gamma_distr} for details.

\medskip

For the most common families of distributions, closed-form expressions for the $\Ralpha(P_{\theta_0}|P_{\theta_1},P_{\theta_2})$ codivergences are reported in Table \ref{tab:explicit_expressions}. Derivations for these expressions are given in Section \ref{sec.explicit}.
This section also contains expressions for the Gamma distribution.
As mentioned before, these codivergences quantify to which extent the measures $P_1$ and $P_2$ represent different directions around $P_0.$
The explicit formulas show this in terms of the parameters and reveal significant similarity between the different families.
For the multivariate normal distribution the $\Ralpha$ codivergence vanishes if and only if the vectors $\theta_1-\theta_0$ and $\theta_2-\theta_0$ are orthogonal.


\begin{table}[htb]
    \centering
    \renewcommand{\arraystretch}{1.5}
    \resizebox{\textwidth}{!}{%
    \begin{tabular}{c|c}
    \text{distribution}
    & $\Ralpha(P_0 | P_1, P_2)$
    \\ \hline
    $P_j=\Nc(\theta_j, \sigma^2 I_d),$ 
    & \multirow{2}{*}{ $\exp\Big(\alpha^2 \dfrac{\langle \theta_1 - \theta_0, \theta_2 - \theta_0 \rangle}{\sigma^2}\Big) -1$ } \\ 
    $\theta_j \in \Rb^d,$ $\sigma > 0$ &
    \\ \hline
    $P_j = \otimes_{\ell=1}^d \Pois(\lambda_{j\ell}),$ 
    & \multirow{2}{*}{ $\exp\Big(\sum_{\ell=1}^d \lambda_{0\ell}^{1 - 2\alpha}
    \big( \lambda_{1\ell}^\alpha - \lambda_{0\ell}^{\alpha} \big)
    \big( \lambda_{2\ell}^\alpha - \lambda_{0\ell}^{\alpha} \big)
    \Big) - 1$} \\
    $\lambda_{j\ell}>0$
    &
    \\  \hline
    $P_j = \otimes_{\ell=1}^d \Exp(\beta_{j\ell}),$
    & $\displaystyle \prod_{\ell=1}^d
    \dfrac{
    \big(\beta_{0\ell}
    + \alpha (\beta_{1\ell} - \beta_{0\ell}) \big)
    \big(\beta_{0\ell}
    + \alpha (\beta_{2\ell} - \beta_{0\ell}) \big)
    }{
    \beta_{0\ell} \big(\beta_{0\ell}
    + \alpha (\beta_{1\ell} + \beta_{2\ell} - 2 \beta_{0\ell}) \big)
    }
    - 1,$
    \\ 
    $\beta_{j\ell}>0$
    & if all the involved quantities are positive,
    and $+\infty$ else
    \\ \hline
    \multirow{2}{*}{$P_j = \otimes_{\ell=1}^d \Ber(\theta_{j\ell}),$}
    & \multirow{2}{*}{ $\displaystyle \prod_{\ell=1}^d
    \dfrac{\theta_{0\ell}^{1 - 2\alpha} \theta_{1\ell}^\alpha \theta_{2\ell}^\alpha
    + (1 - \theta_{0\ell})^{1 - 2 \alpha} (1 - \theta_{1\ell})^\alpha (1 - \theta_{2\ell})^\alpha}{
    r(\theta_{0\ell}, \theta_{1\ell}) r(\theta_{0\ell}, \theta_{2\ell})
    }
    - 1,$}
    \\
    & 
    \\
    $\theta_{j\ell}\in (0,1)$ &
    $r(\theta_0,\theta_1) := \theta_0^{1 - \alpha} \theta_1^\alpha
    + (1 - \theta_0)^{1 - \alpha} (1 - \theta_1)^\alpha$ \\
\end{tabular}
}
\vspace{0.2em}
\caption{Closed-form expressions for the $\Ralpha$ codivergence for some parametric distributions. Proofs can be found in Section \ref{sec.explicit}.}
\label{tab:explicit_expressions}
\end{table}

\section{Divergence matrices}
\label{sec:div_matrices}

\begin{defi}
    Let $M\geq 1.$ For a given codivergence $D( \cdot | \cdot, \cdot)$ on a space $\Xc \subset E$ and $u, v_1, \dots, v_M$ elements of $\Xc$, we define the divergence matrix $D(u | v_1, \dots, v_M)$ as the $M \times M$ matrix with $(j,k)$-th entry
    $D(u | v_1, \dots, v_M)_{j,k} := D(u | v_j, v_k)$, for all $1 \leq j,k \leq M$.
\end{defi}

If $v_1, \dots, v_M$ are all in a neighborhood of $u$, the divergence matrix $D$ can be related to the Gram matrix of the bilinear form
$\langle \cdot, \cdot \rangle_{u}$.
Formally, for $\t = (t_1, \dots, t_M) \in \Rb^M$ such that for any $i = 1, \dots, M, \, u + t_i h_i \in \Xc$, we have
\begin{align*}
    D(u | u + t_1 h_1, \dots, u + t_M h_M)
    = \t \Gb_{u}  \t^\top + o(\|\mathbf{t}\|^2),
\end{align*}
with Gram matrix $\Gb_{u}
:= (\langle h_i, h_j \rangle_{u})_{1 \leq i,j \leq M}$.

\medskip

Based on the codivergences $\Vphi(P_0|P_1,P_2), \Rphi(P_0|P_1,P_2),$ one can now define corresponding $M\times M$ divergence matrices with $(j,k)$-th entry
\begin{align}
    &\Vphi(P_0|P_1,\ldots,P_M)_{j,k}
    :=\Vphi(P_0|P_j,P_k) \\
    &\hspace{1.5cm} = 
    \int \phi\Big(\frac{dP_j}{dP_0}\Big)\phi\Big(\frac{dP_k}{dP_0}\Big) dP_0-\int \phi\Big(\frac{dP_j}{dP_0}\Big)dP_0 \int \phi\Big(\frac{dP_k}{dP_0}\Big) dP_0, \notag
\end{align}
and
\begin{align}
    \hspace{-0.3cm} \Rphi(P_0|P_1,\ldots,P_M)_{j,k}
    :=\Rphi(P_0|P_j,P_k) = \dfrac{
    \int \phi\big(\frac{dP_j}{dP_0}\big)\phi\big(\frac{dP_k}{dP_0}\big) dP_0}{\int \phi\big(\frac{dP_j}{dP_0}\big)dP_0 \int \phi\big(\frac{dP_k}{dP_0}\big) dP_0} - 1,
\end{align} 
provided that $P_1,\ldots,P_M\ll P_0.$
The codivergence matrices are linked by the relationship
\begin{align}
    \Rphi(P_0|P_1,\ldots,P_M)
    = D \cdot \Vphi(P_0|P_1,\ldots,P_M) \cdot D,
\label{eq:link_matrices_phi_RV}
\end{align}
where $D$ denotes the $M\times M$ diagonal matrix with $j$-th diagonal entry $1/\int \phi\big(\frac{dP_j}{dP_0}\big) dP_0,$ $j=1,\ldots,M.$

\medskip

Similarly as $\Cov(X_1, X_2)$ can denote either the covariance between the random variables $X_1$ and $X_2$ or the $2\times 2$ covariance matrix of the random vector $(X_1,X_2),$ the expressions $\Vphi(P_0|P_1,P_2)$ and $\Rphi(P_0|P_1,P_2)$ can also denote either codivergences or $2\times 2$ divergence matrices. Within the context, it is always clear which of the two interpretations is meant. 

\medskip

The divergence matrices with function $\phi_\alpha(x) := x^\alpha$ are denoted by $\Valpha(P_0 | P_1, \dots, P_M)$ and $\Ralpha(P_0 | P_1, \dots, P_M)$.
Similarly,
%
%
%
the $\chi^2$-divergence matrix $\chi^2(P_0 | P_1, \dots, P_M)$
and the Hellinger affinity matrix $\rho(P_0 | P_1, \dots, P_M)$ are the $M\times M$ divergence matrices of the $\chi^2$-codivergence and the Hellinger codivergence with $(j,k)$-th entry
\begin{align*}
    &\chi^2(P_0 | P_1, \dots, P_M)_{j,k}
    := \int \frac{dP_j}{dP_0} dP_k - 1
    \text{, \ \  and,} \\
    &\rho(P_0 | P_1, \dots, P_M)_{j,k}
    := \frac {\int \sqrt{p_j p_k}  \, d\nu}
    {\int \sqrt{p_j p_0  }\, d\nu  \int \sqrt{p_k p_0} \, d\nu} -1,
\end{align*}
for all $1 \leq j, k \leq M$.
As in the previous section, the condition for finiteness of the Hellinger codivergence matrix is weaker than for general $\Rphi$ and $\Vphi$ codivergences. Instead of domination $P_1, \dots, P_M \ll P_0$, it is only required that the integrals $\int \sqrt{p_j p_0} \, d\nu$ are positive, for some dominating measure $\nu$ and $p_j := dP_j / d\nu$.
By \eqref{eq.P0_inner_prod_rewrite},
the local Gram matrix of the $\chi^2$-divergence matrix at a distribution $P_0$ is
$\Gb_{P_0} := \big[\int \frac{h_i h_j}{p_0} d\nu \big]_{1 \leq i,j \leq M},$
and the local Gram matrix of the Hellinger divergence matrix is $\Gb_{P_0} / 4$.

\medskip

Let $\Phi(X) := (\phi(dP_1/dP_0(X)), \dots, \phi(dP_M/dP_0(X)))^\top$ denote the random vector containing the likelihood ratios of the $M$ measures. Since $\Cov(U,V) = E[UV] - E[U]E[V],$ we have 
\begin{align}
    \Vphi(P_0|P_1,\ldots,P_M) = \Cov_{P_0}\big(\Phi(X)\big),
\label{eq:expression_covPhi_V}
\end{align}
where the covariance is computed with respect to the distribution $P_0$ as indicated by the subscript $P_0.$
Moreover, we have 
\begin{align*}
    \v^\top \Vphi(P_0|P_1,\ldots,P_M) \v
    &= \Var_{P_0} \big(\v^\top \Phi(X)\big).
\end{align*}
Applying Equation~\eqref{eq:link_matrices_phi_RV} yields moreover
\begin{align}
    \Rphi(P_0|P_1,\ldots,P_M) = D \Cov_{P_0}\big(\Phi(X)\big) D.
\label{eq:expression_covPhi_R}
\end{align}

This shows that $\Vphi(P_0|P_1,\ldots,P_M)$ and $\Rphi(P_0|P_1,\ldots,P_M)$ can be interpreted as covariance matrices and are therefore symmetric and positive semi-definite. Applying the Taylor expansion to the likelihood ratios in the previous identities provides a direct way of recovering the local Gram matrix associated to the nonparametric Fisher information metric.

\medskip

In a next step, we state a more specific identity for the $\chi^2$-divergence matrix. To do so, we first extend the usual notion of the $\chi^2$-divergence to the case where the first argument is a signed measure. Let $\mu$ be a finite signed measure and $P$ be a probability measure defined on the same measurable space $(\Omega, \mathcal{A})$. We define the $\chi^2$-divergence of $\mu$ and $P$ by
\begin{align}
    \chi^2(\mu,P) := \begin{cases}
    \displaystyle \int \Big( \frac{d\mu}{dP} - \mu(\Omega)\Big)^2 \, dP,
    & \text{ if } \mu\ll P, \\
    + \infty & \text{ else.}
    \end{cases} 
    \label{eq.chi_divergence_signed_measure}
\end{align}
Here, $d\mu/dP$ denotes the Radon-Nikodym derivative of the signed measured $\mu$ with respect to $P$ (defined e.g. in Theorem 4.2.4 in \cite{MR3098996}).
This definition of $\chi^2(\mu,P)$ generalizes the case where $\mu$ is a probability measure and allows us to rewrite the $\chi^2$-divergence matrix as
\begin{align}
    \hspace{-0.55em}
    \v^\top \hspace{-0.1em} \chi^2(P_0 | P_1, \dots, P_M) \v
    \hspace{-0.2em}
    &= \hspace{-0.2em} \int \hspace{-0.2em}
    \Big(\sum_{j=1}^M \Big(\frac{dP_j}{dP_0}-1\Big) v_j \Big)^2 dP_0
    \hspace{-0.1em}
    = \chi^2\Big(\sum_{j=1}^M v_j P_j, P_0\Big),
    \label{eq.chi2_int_representation}
\end{align}
with $\sum_{j=1}^M v_j P_j$ the mixture (signed) measure of $P_1, \dots, P_M.$
Similarly, for the Hellinger divergence matrix it can be checked that 
\begin{align}
    \v^\top \rho(P_0 | P_1,\dots,P_M) \v
    = \int \bigg( \sum_{j=1}^M \Big(\frac{\sqrt{p_j}}{\int \sqrt{p_j p_0}\, d\nu}-\sqrt{p_0} \Big) v_j \bigg)^2\, d\nu.
    \label{eq.Hell_aff_pos_def}
\end{align}

Writing $\Rank(A)$ for the rank of a matrix $A$ and $\Rank(x_1, \dots, x_n)$ for the dimension of the linear span of $n$ elements $x_1, \dots, x_n$ in a vector space $E$, we will now derive an identity for the rank of divergence matrices.

\begin{prop}
Let $M \geq 1$, and let $P_0, P_1, \dots, P_M$ be $(M+1)$ probability distributions.

\vspace{0.3em}

\begin{itemize}    
    \item[(i)] Assume that $P_1, \dots, P_M \ll P_0$. Then for any non-negative function $\phi:[0,\infty)\to [0,\infty)$ such that $\phi(1)=1$, we have
    \begin{align*}
        \vspace{-0.3em}
        \Rank(\Rphi(P_0 | P_1, \dots, P_M))
        &= \Rank(\Vphi(P_0 | P_1, \dots, P_M)) \\
        &= \Rank \bigg(1, \phi \, \circ \, \frac{dP_{1}}{dP_0}, \dots, \phi \, \circ \, \frac{dP_M}{dP_0} \bigg) - 1,
    \end{align*}
    where functions are considered as elements of the vector space $L^1(\Ac, \Bc, P_0)$, that is, linear independence is considered $P_0$-almost everywhere.

    \vspace{0.3em}

    
    \item[(ii)] Let $\nu$ be a common dominating measure of $P_0, \dots, P_M$. Assume that $\forall j = 1, \dots, M, \,
    \int p_j p_0 d\nu > 0$ with $p_j := dP_j / d\nu$.
    Then we have $$\Rank(\rho(P_0 | P_1, \dots, P_M)) = \Rank(\sqrt{p_0}, \sqrt{p_1}, \dots, \sqrt{p_M}) - 1,$$
    where functions are considered as elements of the vector space $L^1(\Ac, \Bc, \nu)$.
\end{itemize}
\label{prop:div_matrices}
\end{prop}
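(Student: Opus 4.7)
My plan is to convert both rank identities into rank--nullity computations for suitable linear maps $\Rb^{M+1} \to L^2$, by exploiting the quadratic-form representations \eqref{eq:expression_covPhi_V}--\eqref{eq:expression_covPhi_R} for (i) and \eqref{eq.Hell_aff_pos_def} for (ii). Throughout I work under the implicit assumption that the divergence matrices in question have finite entries, so that their ranks are well defined.

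For part (i), I would first note that $\int \phi(dP_j/dP_0)\,dP_0 > 0$ for every $j$ (otherwise the $j$-th row and column of $\Vphi$ vanish and $\Rphi$ is not defined, making the claim vacuous); consequently the diagonal matrix $D$ in \eqref{eq:link_matrices_phi_RV} is invertible and $\Rank(\Rphi)=\Rank(\Vphi)$. By \eqref{eq:expression_covPhi_V}, $\v \in \ker \Vphi$ iff $\sum_{j=1}^M v_j\phi(dP_j/dP_0)$ is $P_0$-a.s.\ equal to some constant $-c$. I would then introduce the linear map
\[
    \Psi\colon \Rb^{M+1}\to L^2(\Ac,\Bc,P_0),\qquad (c,v_1,\dots,v_M)\mapsto c\cdot 1 + \sum_{j=1}^M v_j\,\phi\!\circ\!\frac{dP_j}{dP_0},
\]
whose image has dimension $r := \Rank(1,\phi\circ dP_1/dP_0,\dots,\phi\circ dP_M/dP_0)$, so $\dim \ker \Psi = M+1-r$. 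The projection $(c,\v)\mapsto \v$ restricted to $\ker \Psi$ is a linear bijection onto $\ker \Vphi$: surjectivity is the above characterization of $\ker \Vphi$, and injectivity holds because $1$ is not the zero element of $L^2(P_0)$. Hence $\Rank \Vphi = M - (M+1-r) = r - 1$, as claimed.

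For part (ii), \eqref{eq.Hell_aff_pos_def} gives $\v \in \ker \rho$ iff $\sum_j v_j(\sqrt{p_j}/c_j - \sqrt{p_0})=0$ $\nu$-a.e., where $c_j := \int \sqrt{p_j p_0}\,d\nu > 0$ by assumption. Setting $\beta_j := v_j/c_j$ and $\beta_0 := -\sum_j v_j$ puts this into the form $(\beta_0,\beta_1,\dots,\beta_M)\in \ker \wt\Psi$, where
\[
    \wt\Psi\colon \Rb^{M+1}\to L^2(\nu),\qquad (\beta_0,\dots,\beta_M)\mapsto \beta_0\sqrt{p_0} + \sum_{j=1}^M \beta_j\sqrt{p_j},
\]
whose image has dimension $s := \Rank(\sqrt{p_0},\sqrt{p_1},\dots,\sqrt{p_M})$. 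The substitution $\v \mapsto (\beta_0,\dots,\beta_M)$ is a linear bijection from $\Rb^M$ onto the hyperplane $L := \{\beta \in \Rb^{M+1}: \beta_0 + \sum_j c_j\beta_j = 0\}$ (with inverse $v_j = c_j\beta_j$), so $\dim \ker \rho = \dim(\ker \wt\Psi \cap L)$.

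The key step is to verify the automatic containment $\ker \wt\Psi \subseteq L$, after which $\ker \wt\Psi \cap L = \ker \wt\Psi$ and the conclusion $\Rank \rho = M - (M+1-s) = s - 1$ follows. This is the main (and essentially only) obstacle, but it admits a one-line proof: if $\beta_0\sqrt{p_0} + \sum_j \beta_j \sqrt{p_j} = 0$ $\nu$-a.e., multiplying by $\sqrt{p_0}$ and integrating (using $\int p_0\,d\nu = 1$ and $\int \sqrt{p_j p_0}\,d\nu = c_j$) yields $\beta_0 + \sum_j c_j \beta_j = 0$. Everything else is routine linear algebra.
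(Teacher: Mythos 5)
Your proposal is correct, and for part (ii) it takes a genuinely different route from the paper. For part (i) the two arguments are essentially the same dimension count: the paper passes through the identity $\Vphi=\Cov_{P_0}(\Phi(X))$ and a lemma equating the rank of a covariance matrix with the rank of the centered random variables, then adjoins the constant $1$; your rank--nullity computation for the map $\Psi$ packages exactly the same linear-dependence bookkeeping, with the injectivity of the projection on $\ker\Psi$ playing the role of the paper's observation that centered nonzero variables are independent of the constant. For part (ii), however, the paper first establishes only the full-rank characterization (invertibility of $\rho(P_0|P_1,\dots,P_M)$ is equivalent to linear independence of $\sqrt{p_0},\dots,\sqrt{p_M}$) and then bootstraps to general rank via the principal-submatrix criterion for positive semi-definite matrices (its Lemma~\ref{lemma:rank_principal_minors}), using that every principal submatrix of the Hellinger divergence matrix is again a Hellinger divergence matrix. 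You instead compute $\ker\rho$ directly from \eqref{eq.Hell_aff_pos_def}, identify it with $\ker\wt\Psi\cap L$, and prove the containment $\ker\wt\Psi\subseteq L$ by multiplying the relation $\sum_j\beta_j\sqrt{p_j}=0$ by $\sqrt{p_0}$ and integrating --- which is precisely the computation the paper uses inside its full-rank step, but deployed so that the general rank formula falls out in one pass. Your route is shorter, avoids the submatrix lemma entirely, unifies the treatment of (i) and (ii), and makes the origin of the ``$-1$'' transparent (the constraint hyperplane $L$ automatically absorbs $\ker\wt\Psi$); the paper's route has the mild advantage of only ever needing the singular/nonsingular dichotomy and of exhibiting the hereditary structure of the divergence matrices, which is of some independent interest. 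Two small points you should make explicit: the passage from $\v^\top\Vphi\v=0$ (resp.\ $\v^\top\rho\v=0$) to $\Vphi\v=0$ (resp.\ $\rho\v=0$) uses positive semi-definiteness, and part (i) tacitly requires $\phi\circ(dP_j/dP_0)\in L^2(P_0)$ for the covariance representation to make sense --- the same finiteness the paper also assumes implicitly.
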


Statement (ii) is not a consequence of (i) with $\phi(x) = x^{1/2}.$ Indeed, (i) relies on likelihood ratios assuming that the measures $P_1, \dots, P_M$ are dominated by $P_0,$ while (ii) only requires that each of the probability measures $P_1, \dots, P_M$ has a common support with $P_0$ of positive $P_0$-measure. The proof of (ii) exploits the specific property~\eqref{eq.Hell_aff_pos_def} of the Hellinger divergence.

\medskip

Proposition~\ref{prop:div_matrices} applied to $\phi(x) = x$ shows that whenever $P_0$ is a linear combination of $P_1, \dots, P_M$, then $\Rank(1, dP_{1}/dP_0, \dots, dP_{M}/dP_0) < M + 1$ and $\Rank(\chi^2(P_0|P_1, \dots, P_M)) < M,$ which means that the $\chi^2(P_0|P_1, \dots, P_M)$ divergence matrix is singular.
Similarly, whenever $\sqrt{p_0}$ is a linear combination of $\sqrt{p_1}, \dots, \sqrt{p_M}$, the Hellinger divergence matrix is singular.

\begin{proof}[Proof of Proposition~\ref{prop:div_matrices}]

We first prove (i).
Since $D$ is an invertible matrix, a direct consequence of Equation~\eqref{eq:link_matrices_phi_RV} is $\Rank(\Rphi(P_0 | P_1, \dots, P_M))
= \Rank(\Vphi(P_0 | P_1, \dots, P_M)).$
Applying Equation~\eqref{eq:expression_covPhi_V} and then Lemma~\ref{lemma:rank_CovZ}, we obtain that
$r := \Rank(\Vphi(P_0 | P_1, \dots, P_M))
= \Rank(\Cov_{P_0}(Z_1, \dots, Z_M))
= \Rank(Z_1 - E_{P_0} Z_1, \dots, Z_M - E_{P_0} Z_M)$,
where $Z_j := \phi(dP_j/dP_0(X))$ for $j=1, \dots, M$ and $E_{P_0}$ denotes the expectation with respect to $P_0$.
The random vectors $Z_1 - E_{P_0} Z_1, \dots, Z_M - E_{P_0} Z_M$ are centered and therefore linearly independent of the (constant) random variable $Z_0 := 1 = \phi(dP_0 / dP_0(X))$.
Therefore,
\begin{align*}
    r &= \Rank(Z_1 - E_{P_0} Z_1, \dots, Z_M - E_{P_0} Z_M) \\
    &= \Rank(1, Z_1 - E_{P_0} Z_1, \dots, Z_M - E_{P_0} Z_M) - 1 \\
    &= \Rank(Z_0, Z_1, \dots, Z_M) - 1.
\end{align*}

By Lemma~\ref{lemma:rank_max_lincombin}, $r$ is the highest integer such that there exists $i_1, \dots, i_r \in \{0, \dots, M\}$ with $(Z_{i_1}, \dots, Z_{i_r})$ linearly independent random variables $P_0$-almost surely.

Using the definition of the $Z_j$ and $X \sim P_0$, the random variables $\{Z_{i_1}, \dots, Z_{i_r}\}$ are linearly independent $P_0$-almost surely if and only if
$P_0 \big(\sum_{j=1}^r a_j \phi(dP_{i_j}/dP_0(X)) = 0 \big)
= 1$ implies $a_0=\ldots=a_r=0$.
This is the case if and only if 
the functions
$\{\phi \, \circ \, dP_{i_1}/dP_0, \dots,
\phi \, \circ \, dP_{i_r}/dP_0\}$\
are linearly independent $P_0$-almost everywhere, proving
$\Rank(Z_{i_1}, \dots, Z_{i_r})
= \Rank(\phi \, \circ \, dP_{i_1}/dP_0, \dots,
\phi \, \circ \, dP_{i_r}/dP_0)$.



\medskip

Before proving (ii) in full generality, we first show that $\Rank(\rho(P_0 | P_1, \dots, P_M)) = M$ if and only if all the $M+1$ functions $\sqrt{p_0}, \dots, \sqrt{p_M}$ are linearly independent $\nu$-almost everywhere.
The matrix is singular if and only if there exists a non-null vector $v$ such that
$\sum_{j=1}^M \frac{v_j \sqrt{p_j}}{\int \sqrt{p_j p_0}\, d\nu}
= \sum_{j=1}^M v_j \sqrt{p_0}$ $\nu$-almost everywhere.
This is the case if and only if there are numbers $w_0,\dots,w_M,$ that are not all equal to zero, satisfying $\sum_{j=0}^M w_j\sqrt{p_j}=0,$ $\nu$-almost everywhere. To verify the more difficult reverse direction of this equivalence, it is enough to observe that $\sum_{j=0}^M w_j\sqrt{p_j}=0$ implies $w_0=- \sum_{j=1}^M w_j \int \sqrt{p_j p_0}\, d\nu$ and thus, taking $v_j=w_j \int \sqrt{p_j p_0}\, d\nu$ yields $\sum_{j=1}^M \frac{v_j \sqrt{p_j}}{\int \sqrt{p_j p_0}\, d\nu}
= \sum_{j=1}^M v_j \sqrt{p_0}.$

\medskip

We now show the general case of (ii). For an $n\times n$ matrix $A$ and index sets $I,J \subset \{1, \dots, n\}$, the submatrix $A_{I,J}$ defines the submatrix consisting of the rows $I$ and the columns $J$. If $I=J$, $A_{I,I}$ is called a principal submatrix of the matrix~$A$. Let $r$ be an integer in $\{1, \dots, M\}$. By Lemma \ref{lemma:rank_principal_minors},
$$r = \Rank(\rho(P_0 | P_1, \dots, P_M))$$
if and only if
$$\begin{array}{c}
\rho(P_0 | P_1, \dots, P_M)
\text{ has an invertible principal submatrix of size } r \\
\text{ and all principal submatrix of size } r+1 
\text{ of } \rho(P_0 | P_1, \dots, P_M) 
\text{ are singular}
\end{array}
$$
if and only if (using the fact that the principal submatrices of $\rho(P_0 | P_1, \dots, P_M)$ of size $k$ are exactly the matrices of the form $\rho(P_0 | P_{i_1}, \dots, P_{i_k})$ for some $i_1, \dots, i_k \in \{1, \dots M\}$)
\begin{align*}
    r = \max_{} \{k=1, \dots, M: \,
    \exists i_1, \dots, i_k \in \{1, \dots, M\}, \,
    \rho(P_0 | P_{i_1}, \dots, P_{i_k}) \text{ is invertible} \}
\end{align*}
if and only if (using the case of full rank that was proved before)
\begin{align*}
    r = \max_{} &\bigg\{k=1, \dots, M: \,
    \exists i_1, \dots, i_k \in \{1, \dots, M\}, \\
    &\sqrt{p_0}, \sqrt{p_{i_1}}, \dots, \sqrt{p_{i_r}}
    \text{ are linearly independent} \bigg\}
\end{align*}
if and only if 
$r = \Rank(\sqrt{p_0}, \sqrt{p_1}, \dots, \sqrt{p_M}) - 1.$

\end{proof}


\section{Data processing inequality for the \texorpdfstring{$\chi^2$}{chi2}-divergence matrix}
\label{sec:data_processing}

In a parametric statistical model $(Q_\theta)_{\theta \in \Theta}$, it is assumed that the statistician observes a random variable $X$ following one of the distributions $Q_\theta$ for some $\theta \in \Theta$. If we transform $X$ to obtain a new variable $Y$, then $Y$ follows the distribution $P_\theta := K Q_\theta$ for some Markov kernel $K$. When $\theta$ is unknown but the Markov kernel $K$ is known and independent of $\theta$, this means that the new statistical model is $(P_\theta := K Q_\theta, \, \theta \in \Theta)$.
As in the usual case for the $\chi^2$-divergence, it is natural to think that such a transformation cannot increase the amount of information present in the model.
In our more general framework, such an inequality still holds and is presented in the following data processing inequality.
\begin{thm}[Data processing / entropy contraction]\label{thm.data_processing}
    If $K$ is a Markov kernel and $Q_0,\dots,Q_M$ are probability measures such that $Q_0$ dominates $Q_1,\dots,Q_M,$ then,
    \begin{align*}
        \chi^2\big(K Q_0 | KQ_1,\dots, KQ_M\big) 
        \leq \chi^2\big(Q_0 | Q_1,\dots, Q_M\big),
    \end{align*}
    where $\leq$ denotes the partial order on the set of positive semi-definite matrices.
\end{thm}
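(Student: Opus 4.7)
The plan is to reduce the matrix PSD inequality to a family of scalar inequalities and then establish a signed-measure extension of the standard $\chi^2$ data-processing inequality. By representation~\eqref{eq.chi2_int_representation}, for every $\v \in \Rb^M$,
\begin{align*}
    \v^\top \chi^2\big(P_0 \, \big| \, P_1, \dots, P_M\big) \v
    = \chi^2\Big(\sum_{j=1}^M v_j P_j, \, P_0\Big),
\end{align*}
so the theorem is equivalent to showing that, for every $\v \in \Rb^M$,
\begin{align*}
    \chi^2\Big(\sum_{j=1}^M v_j K Q_j, \, K Q_0\Big)
    \leq \chi^2\Big(\sum_{j=1}^M v_j Q_j, \, Q_0\Big).
\end{align*}
Since the action of a Markov kernel on finite signed measures is linear, one has $\sum_j v_j K Q_j = K\big(\sum_j v_j Q_j\big)$. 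Introducing $\mu := \sum_j v_j Q_j$, a finite signed measure dominated by $Q_0$, the claim further reduces to the signed-measure data-processing inequality
\begin{align}
    \chi^2(K\mu, \, KQ_0) \leq \chi^2(\mu, \, Q_0).
    \label{eq:plan_DPI}
\end{align}

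To prove \eqref{eq:plan_DPI} I would first establish $K\mu \ll KQ_0$: any $KQ_0$-null set $A$ satisfies $K(x,A) = 0$ for $Q_0$-almost every $x$; applying this to the Jordan decomposition of $\mu$, also $|\mu|$-almost every $x$ satisfies $K(x,A) = 0$, whence $(K\mu)(A) = 0$. Write $f := d\mu/dQ_0$ and $g := d(K\mu)/d(KQ_0)$, and construct a pair $(X,Y)$ with joint distribution $Q_0(dx) \, K(x,dy)$ on some probability space, so that $Y \sim KQ_0$. For every measurable $A$,
\begin{align*}
    \int_A g\,d(KQ_0) = (K\mu)(A)
    = \int K(x,A) f(x)\,dQ_0(x)
    = E\big[f(X) \, \1_A(Y)\big],
\end{align*}
which identifies $g(Y) = E[f(X) \mid Y]$ almost surely. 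The conditional Jensen inequality applied to the convex function $t \mapsto t^2$ then yields
\begin{align*}
    \int g^2\,d(KQ_0) = E\big[g(Y)^2\big]
    = E\Big[\big(E[f(X) \mid Y]\big)^2\Big]
    \leq E\big[f(X)^2\big] = \int f^2\,dQ_0.
\end{align*}
Combining this with the total-mass identity $(K\mu)(\Omega') = \int K(x,\Omega')\,d\mu(x) = \mu(\Omega)$, which holds because $K(x,\cdot)$ is a probability measure, and the expansion $\chi^2(\mu,Q_0) = \int f^2\,dQ_0 - \mu(\Omega)^2$ implied by \eqref{eq.chi_divergence_signed_measure} yields \eqref{eq:plan_DPI}.

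The main technical point is making the identification $g(Y) = E[f(X)\mid Y]$ rigorous for a possibly sign-changing density $f$, together with the verification that the Radon–Nikodym derivative of the signed image measure $K\mu$ against $KQ_0$ genuinely exists. The scheme above handles both cleanly, because conditional expectations are defined for all integrable real-valued random variables and the squared-function inequality in Jensen's step holds on all of $\Rb$, not just on $\Rb_+$; the rest is routine.
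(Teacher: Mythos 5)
Your proposal is correct, and while the outer reduction is identical to the paper's (both pass through the quadratic-form representation \eqref{eq.chi2_int_representation} and reduce the matrix inequality to a data-processing inequality for the signed measure $\mu=\sum_j v_j Q_j$ against $Q_0$), your proof of that key inequality takes a genuinely different route. The paper's Lemma~\ref{lem.data_processing_signes_measure} Jordan-decomposes $\mu=\alpha_+\mu_+-\alpha_-\mu_-$, invokes the classical data-processing inequality for the $\chi^2$-divergence of \emph{probability} measures twice as a black box, and controls the cross term via the orthogonality identity $\int(\tfrac{d\mu_+}{dP}-1)(\tfrac{d\mu_-}{dP}-1)\,dP=-1$ versus its $\geq -1$ counterpart after applying $K$. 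You instead prove the signed-measure inequality from scratch: constructing $(X,Y)$ with law $Q_0(dx)K(x,dy)$, identifying $d(K\mu)/d(KQ_0)$ evaluated at $Y$ with $E[f(X)\mid Y]$ for $f=d\mu/dQ_0$, and applying conditional Jensen together with the mass identity $(K\mu)(\Omega')=\mu(\Omega)$ and the expansion $\chi^2(\mu,Q_0)=\int f^2\,dQ_0-\mu(\Omega)^2$. All steps check out: $f(X)$ is integrable since $\mu$ is finite, the defining relation $E[g(Y)\1_A(Y)]=E[f(X)\1_A(Y)]$ holds for all measurable $A$, and Jensen for $t\mapsto t^2$ needs no sign restriction. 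Your argument is essentially the measure-theoretic form of the paper's ``simpler proof'' via Cauchy--Schwarz on kernel densities, but, unlike that version, it does not require the common domination assumption \eqref{eq:assump:common_domination}; so you obtain the full generality of the paper's main proof without the Jordan decomposition, the cross-term bookkeeping, or reliance on the probability-measure DPI as an external ingredient.
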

In particular, the $\chi^2$-divergence matrix is invariant under invertible transformations.
The rest of this section is devoted to the proof of Theorem \ref{thm.data_processing}.
First, we generalize the well-known data-processing inequality for the $\chi^2$-divergence to the case \eqref{eq.chi_divergence_signed_measure}, where one measure is a finite signed measure and use afterwards Equation~\eqref{eq.chi2_int_representation}.

\medskip

The $\chi^2(\mu,P)$-divergence with a signed measure can be computed from the usual $\chi^2$-divergence between probability measures by the following relationship
\begin{lemma}
    Assume that $\mu \ll P$.
    Let $\mu=\alpha_+\mu_+-\alpha_-\mu_-$ be the Jordan decomposition \eqref{Jordan_decomposition_theorem} of $\mu$ with $\alpha_+, \alpha_- \geq 0$ and $\mu_+,$ $\mu_-$ orthogonal probability measures.
    Then 
    $$\chi^2(\mu,P) = \alpha_+^2 \chi^2\big(\mu_+,P\big)
    + \alpha_-^2 \chi^2 \big(\mu_- , P\big) + 2\alpha_+ \alpha_-.$$
    \label{lemma:chi2_signed_measure}
\end{lemma}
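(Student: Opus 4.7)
The plan is to expand the integral defining $\chi^2(\mu,P)$ directly in terms of Radon--Nikodym derivatives and then exploit the mutual singularity of $\mu_+$ and $\mu_-$. First, I would set $h := d\mu/dP$; the Jordan decomposition together with $\mu \ll P$ forces both $\alpha_+ \mu_+$ and $\alpha_- \mu_-$ to be absolutely continuous with respect to $P$ (because they are the positive and negative parts of a measure dominated by $P$), so the densities $h_\pm := d\mu_\pm / dP$ exist and satisfy $h = \alpha_+ h_+ - \alpha_- h_-$. Since $\mu_+(\Omega) = \mu_-(\Omega) = 1$, we also have $\mu(\Omega) = \alpha_+ - \alpha_-$.

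The crucial ingredient is the orthogonality of $\mu_+$ and $\mu_-$: there exists a measurable set $A$ with $\mu_+(A) = \mu_-(A^c) = 0$, which gives $h_+ = 0$ $P$-a.s.\ on $A$ and $h_- = 0$ $P$-a.s.\ on $A^c$, hence $h_+ h_- = 0$ $P$-almost everywhere. This is the single observation that makes all cross-terms collapse.

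Next I would compute
\begin{align*}
    \chi^2(\mu,P)
    &= \int \bigl(h - (\alpha_+ - \alpha_-)\bigr)^2 \, dP
    = \int \bigl(\alpha_+(h_+ - 1) - \alpha_-(h_- - 1)\bigr)^2 \, dP \\
    &= \alpha_+^2 \int (h_+ - 1)^2 \, dP + \alpha_-^2 \int (h_- - 1)^2 \, dP - 2 \alpha_+ \alpha_- \int (h_+ - 1)(h_- - 1) \, dP.
\end{align*}
The first two integrals are precisely $\chi^2(\mu_+, P)$ and $\chi^2(\mu_-, P)$ because $\mu_\pm(\Omega) = 1$. For the cross integral, I would expand and use $\int h_+ \, dP = \int h_- \, dP = 1$ together with $\int h_+ h_- \, dP = 0$ (from the orthogonality step), obtaining
\begin{equation*}
    \int (h_+ - 1)(h_- - 1) \, dP = 0 - 1 - 1 + 1 = -1.
\end{equation*}
Substituting this value yields the claimed identity
$\chi^2(\mu, P) = \alpha_+^2 \chi^2(\mu_+, P) + \alpha_-^2 \chi^2(\mu_-, P) + 2 \alpha_+ \alpha_-$.

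There is no real obstacle here; the only subtlety worth stating carefully is that $\mu \ll P$ implies $\mu_\pm \ll P$, which in turn guarantees that the densities $h_\pm$ are well defined and that the orthogonality of $\mu_+, \mu_-$ transfers to the $P$-a.s.\ identity $h_+ h_- = 0$. Once that is in place, the calculation is a one-line expansion.
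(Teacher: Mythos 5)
Your proof is correct and follows essentially the same route as the paper: expand the square $\bigl(\alpha_+(h_+-1)-\alpha_-(h_--1)\bigr)^2$ and use the orthogonality of $\mu_+,\mu_-$ to evaluate the cross term as $-1$. The only difference is that you spell out the $P$-a.s.\ identity $h_+h_-=0$ explicitly, which the paper leaves implicit.
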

\begin{proof}
    Observe that $\displaystyle \alpha_+^2 \chi^2\big(\mu_+,P\big)
    +\alpha_-^2 \chi^2\big(\mu_-,P\big)
    +2\alpha_+\alpha_- 
    =\int \Big(\alpha_+\Big(\frac{d\mu_+}{dP}-1\Big)-\alpha_-\Big(\frac{d\mu_-}{dP}-1\Big)\Big)^2 \, dP
    = \int \Big(\frac{d\mu}{dP}-\mu(\Omega)\Big)^2 \, dP
    =\chi^2(\mu,P).$
\end{proof}

\medskip

\begin{lemma}
    If $\mu$ is a finite signed measure, $P$ is a probability measure and both measures are defined on the same measurable space, then, for any Markov kernel $K,$ the data-processing inequality
    \begin{align*}
        \chi^2(K\mu,KP)
        \leq \chi^2(\mu,P)
    \end{align*}
    holds.
    \label{lem.data_processing_signes_measure}
\end{lemma}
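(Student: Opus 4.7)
The plan is to prove the data-processing inequality by identifying the Radon--Nikodym derivative $dK\mu/dKP$ with a conditional expectation, at which point the inequality follows from the law of total variance.

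First, I would reduce to the non-trivial case. If $\mu \not\ll P$ then the right-hand side equals $+\infty$, so the inequality is trivial; assume $\mu \ll P$. If $\chi^2(\mu,P) = +\infty$, the inequality is again trivial, so assume further that $h := d\mu/dP$ lies in $L^2(P)$. Next, I would check that $K\mu \ll KP$: if $KP(B)=0$ then $x \mapsto k(x,B)$ vanishes $P$-a.s., hence $\mu$-a.s.\ by absolute continuity, so $K\mu(B)=\int k(x,B)\,d\mu(x) = 0$. Write $g := dK\mu/dKP$. Observe also that Markov kernels preserve total mass, so $K\mu(\Omega) = \mu(\Omega)$.

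Second, I would realize the kernel as a conditional distribution. Let $(X,Y)$ be drawn from the joint law $P(dx)\,k(x,dy)$ on the product of the source and target spaces, so that $X \sim P$ and $Y \sim KP$. For every bounded measurable $f$,
\begin{align*}
\int f(y)\, g(y)\, dKP(y) = \int f\, dK\mu = \int\!\!\int f(y)\, k(x,dy)\, h(x)\, dP(x) = E\bigl[f(Y)\, h(X)\bigr],
\end{align*}
and this identifies $g(Y) = E[h(X)\,|\,Y]$ almost surely, by the defining property of the conditional expectation.

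Third, I would rewrite both $\chi^2$ quantities as variances. Since $E[h(X)] = \int h\, dP = \mu(\Omega)$ and $E[g(Y)] = K\mu(\Omega) = \mu(\Omega)$,
\begin{align*}
\chi^2(\mu,P) = \int \bigl(h - \mu(\Omega)\bigr)^2 dP = \operatorname{Var}\bigl(h(X)\bigr),
\\
\chi^2(K\mu,KP) = \int \bigl(g - \mu(\Omega)\bigr)^2 dKP = \operatorname{Var}\bigl(E[h(X)\,|\,Y]\bigr).
\end{align*}
The law of total variance then gives
\begin{align*}
\operatorname{Var}\bigl(h(X)\bigr) = E\bigl[\operatorname{Var}(h(X)\,|\,Y)\bigr] + \operatorname{Var}\bigl(E[h(X)\,|\,Y]\bigr) \geq \operatorname{Var}\bigl(E[h(X)\,|\,Y]\bigr),
\end{align*}
which is precisely $\chi^2(K\mu,KP) \leq \chi^2(\mu,P)$.

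The only genuine technical point, and thus the main obstacle to watch, is the rigorous identification $g(Y) = E[h(X)\,|\,Y]$: this requires that the Markov kernel $k(x,dy)$ admits a product construction of the joint law, which is standard for Markov kernels between measurable spaces but should be invoked cleanly. Once that identification is in place, the rest is a one-line application of Jensen/total variance. Note that this proof bypasses the Jordan decomposition route (Lemma~\ref{lemma:chi2_signed_measure}) because $K$ need not preserve orthogonality of $\mu_+$ and $\mu_-$; the conditional-expectation viewpoint avoids this difficulty entirely.
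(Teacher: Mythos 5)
Your proof is correct, but it takes a genuinely different route from the paper. The paper's proof goes through the Jordan decomposition $\mu=\alpha_+\mu_+-\alpha_-\mu_-$: it applies the standard data-processing inequality for the $\chi^2$-divergence of \emph{probability} measures to $\mu_+$ and $\mu_-$ separately, and then controls the cross term, which equals $-1$ before applying $K$ (by orthogonality of $\mu_+$ and $\mu_-$) and is only bounded below by $-1$ afterwards --- this one-sided bound goes in the right direction because the cross term enters with coefficient $-2\alpha_+\alpha_-\leq 0$. Your observation that $K$ need not preserve orthogonality is exactly why the paper needs an inequality rather than an equality there; it does not break the paper's argument. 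Your route instead realizes $g(Y):=\tfrac{dK\mu}{dKP}(Y)$ as $E[h(X)\mid Y]$ under the joint law $P(dx)\,k(x,dy)$ and invokes the law of total variance. This buys several things: it is self-contained (it reproves, rather than cites, the underlying data-processing inequality for probability measures), it avoids the Jordan decomposition and the auxiliary Lemma~\ref{lemma:chi2_signed_measure} entirely, and --- unlike the paper's alternative ``simpler proof'' of Theorem~\ref{thm.data_processing} via Cauchy--Schwarz on kernel densities --- it does not need the common-domination assumption \eqref{eq:assump:common_domination}, since conditional expectations exist on arbitrary measurable spaces. The one point you rightly flag, the construction of the joint law on the product $\sigma$-algebra from $P$ and the kernel, is standard and requires no extra hypotheses, so the argument is fully general. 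Both proofs are valid; yours is arguably the more conceptual one, the paper's the more elementary given the classical inequality as a black box.
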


\begin{proof}
We can assume that $\mu \ll P,$ since otherwise the right-hand side of the inequality is $+\infty$ and the result holds. In particular, $\mu \ll \nu$ for a positive measure $\nu$ implies that $K\mu \ll K\nu.$ Indeed, if $K\nu(A)=0$ for a given measurable set $A$, then, $\int K(A,x) \, d\nu(x)=0,$ implying $K(A,\cdot)=0$ $\nu$-almost everywhere. Since $\mu\ll \nu,$ the equality also holds $\mu$-almost everywhere and so $K\mu(A)=\int K(A,x)d\mu(x)=0,$ proving $K\mu \ll K\nu.$ By the Jordan decomposition \eqref{Jordan_decomposition_theorem}, there exist orthogonal probability measures $\mu_+,$ $\mu_-$ and non-negative real numbers $\alpha_+,\alpha_-,$ such that $\mu=\alpha_+\mu_+-\alpha_-\mu_-$ and $\mu(
\Omega)=\alpha_+-\alpha_-.$ Thus, $K\mu=\alpha_+K\mu_+-\alpha_-K\mu_-.$ Observe that 
\begin{align*}
    \int \Big(\frac{dK\mu_+}{dKP}-1\Big)\Big(\frac{dK\mu_-}{dKP}-1\Big) \, dKP
    &= \int \bigg(  \frac{dK\mu_+}{dKP} \frac{dK\mu_-}{dKP}
    - \frac{dK\mu_-}{dKP}
    - \frac{dK\mu_+}{dKP}
    + 1 \bigg) \, dKP \\ 
    &= \int \frac{dK\mu_+}{dKP} \frac{dK\mu_-}{dKP} \, dKP-1
    \geq -1.
\end{align*}
Because $\mu_+$ and $\mu_-$ are orthogonal, we similarly find that 
\begin{align*}
    \int \Big(\frac{d\mu_+}{dP}-1\Big)\Big(\frac{d\mu_-}{dP}-1\Big) \, dP =-1.
\end{align*}
Using the data-processing inequality for the $\chi^2$ divergence of probability measures twice, $K\mu=\alpha_+K\mu_+-\alpha_-K\mu_-$ and $\mu(\Omega)=\alpha_+-\alpha_-,$ we get
\begingroup \allowdisplaybreaks
\begin{align*}
    \chi^2(K\mu, KP)
    &=
    \int \Big(\frac{dK\mu}{dKP}-\mu(\Omega)\Big)^2 \, dKP \\
    &= 
    \int \Big(\alpha_+\Big(\frac{dK\mu_+}{dKP}-1\Big)-\alpha_-\Big(\frac{dK\mu_-}{dKP}-1\Big)\Big)^2 \, dKP \\
    &= \alpha_+^2 \chi^2\big(K\mu_+,KP\big)
    +\alpha_-^2 \chi^2\big(K\mu_-,KP\big) \\
    &- 2\alpha_+\alpha_-\int \Big(\frac{dK\mu_+}{dKP}-1\Big)\Big(\frac{dK\mu_-}{dKP}-1\Big) \, dKP \\
    &\leq \alpha_+^2 \chi^2\big(\mu_+,P\big)
    +\alpha_-^2 \chi^2\big(\mu_-,P\big)
    +2\alpha_+\alpha_- =\chi^2(\mu,P),
\end{align*}
\endgroup
by Lemma~\ref{lemma:chi2_signed_measure}.

\end{proof}

We can now complete the proof of Theorem \ref{thm.data_processing}.

\begin{proof}[Proof of Theorem \ref{thm.data_processing}]
Let $v=(v_1,\ldots,v_M)^\top \in \Rb^M.$ Then, $\sum_{j=1}^M v_jQ_j$ is a finite signed measure dominated by $Q_0$. Using \eqref{eq.chi_divergence_signed_measure} and the previous lemma,
\begingroup \allowdisplaybreaks
\begin{align*}
    v^T \chi^2(K Q_0 | KQ_1, \dots, K Q_M) v
    &=
    \int \bigg(\sum_{j=1}^M v_j\Big(\frac{dKQ_j}{dKQ_0}-1\Big)\bigg)^2 \, dKQ_0 \\
    &= \chi^2\Bigg(K\Big(\sum_{j=1}^M v_jQ_j\Big),KQ_0\Bigg) \\
    &\leq \chi^2\Big(\sum_{j=1}^M v_jQ_j,Q_0\Big) \\
    &= \int \bigg(\sum_{j=1}^M v_j\Big(\frac{dQ_j}{dQ_0}-1\Big)\bigg)^2 \, dQ_0 \\
    &= v^T \chi^2(Q_0 | Q_1, \dots, Q_M) v.
\end{align*}
\endgroup
Since $v$ was arbitrary, this completes the proof.

\end{proof}

A Markov kernel $K$ implies by definition that for every fixed $x,$ $A\mapsto K(A,x)$ is a probability measure.
We now provide a simpler and more straightforward proof for Theorem \ref{thm.data_processing} without using Lemma \ref{lem.data_processing_signes_measure}, under the additional common domination assumption:
\begin{align}
    \text{There exists a measure } \mu,
    \text{ such that } \forall x \in \Omega,
    K(x, \, \cdot \,) \ll \mu.
    \label{eq:assump:common_domination}
\end{align}

\begin{proof}[Simpler proof of Theorem \ref{thm.data_processing} under the additional assumption \eqref{eq:assump:common_domination}]
\ \newline
Because of the identity
$v^\top \chi^2(Q_0 | Q_1 , \dots , Q_M)v
= \int(\sum_{j=1}^M v_j(dQ_j/dQ_0-1))^2 dQ_0,$ it is enough to prove that for any arbitrary vector $v=(v_1,\dots,v_M)^\top$,
\begin{align}
    \int \bigg( \sum_{j=1}^M v_j \Big(\frac{dKQ_j}{dKQ_0}-1\Big) \bigg)^2 \, dKQ_0
    \leq \int \bigg( \sum_{j=1}^M v_j \Big(\frac{dQ_j}{dQ_0}-1\Big) \bigg)^2 \, dQ_0.
    \label{eq.entr_contr_to_show}
\end{align}
Let $\nu$ be a dominating measure for $Q_0,\dots,Q_M$ and recall that by the additional assumption \eqref{eq:assump:common_domination}, for any $x,$ the measure $\mu$ is a dominating measure for the probability measure $A\mapsto K(A,x).$
Write $q_j$ for the $\nu$-density of $Q_j.$ Then, $dKQ_j(y)=\int_X k(y,x) q_j(x) \, d\nu(x) \, d\mu(y)$ for $j=1,\dots,M$ and a suitable non-negative kernel function $k$ satisfying $\int k(y,x) \, d\mu(y)=1$ for all $x.$ Applying the Cauchy-Schwarz inequality, we obtain
\begin{align*}
    \bigg( \sum_{j=1}^M v_j \Big(\dfrac{dKQ_j}{dKQ_0}(y)-1\Big) \bigg)^2
    &=
    \bigg( \dfrac{\int k(y,x) [\sum_{j=1}^M v_j (q_j(x)-q_0(x))] \, d\nu(x)}{\int k(y,x')q_0(x') \, d\nu(x')} \bigg)^2 \\
    &\leq 
    \dfrac{\int k(y,x)\big( \sum_{j=1}^M v_j
    \frac{(q_j(x)-q_0(x))}{q_0(x)}\big)^2 q_0(x) \, d\nu(x)}{\int k(y,x')q_0(x') \, d\nu(x')}.
\end{align*}
Inserting this in \eqref{eq.entr_contr_to_show}, rewriting $dKQ_0(y)=\int_X k(y,x) q_0(x) \, d\nu(x) \, d\mu(y),$ interchanging the order of integration using Fubini's theorem, and applying $\int k(y,x) \, d\mu(y)=1,$ yields
\begin{align*}
    &\int \bigg( \sum_{j=1}^M v_j \Big(\frac{dKQ_j}{dKQ_0}-1\Big) \bigg)^2 \, dKQ_0\\
    &\leq 
    \iint k(y,x) \bigg(\sum_{j=1}^M v_j \frac{(q_j(x)-q_0(x))}{q_0(x)}\bigg)^2 q_0(x) \, d\nu(x)
    \, d\mu(y) \\
    &= \int \bigg(\sum_{j=1}^M v_j \Big( \frac{q_j(x)}{q_0(x)}-1\Big)\bigg)^2 q_0(x) \, d\nu(x)
    \\
    &=\int \bigg( \sum_{j=1}^M v_j \Big(\frac{dQ_j}{dQ_0}-1\Big) \bigg)^2 \, dQ_0.
\end{align*}
\end{proof}

\section{Derivations for explicit expressions for the \texorpdfstring{$\Ralpha$}{R\_alpha} codivergence}
\label{sec.explicit}

In this section we derive closed-form expressions for the $\Ralpha$ codivergences in Table~\ref{tab:explicit_expressions}. We also obtain a closed-form formula for the case of Gamma distributions and discuss a first order approximation of it.

\medskip

\subsection{Multivariate normal distribution}

Suppose $P_j=\Nc(\theta_j, \sigma^2 I_d)$
for $j=0, 1, 2.$
Here $\theta_j=(\theta_{j1},\dots,\theta_{jd})^\top$ are vectors in $\Rb^d$ and $I_d$ denotes the $d\times d$ identity matrix. Then,
\begin{align}
    \Ralpha(P_0 | P_1, P_2)
    = \exp\Big(\alpha^2\frac{\langle \theta_1 - \theta_0,
    \theta_2 - \theta_0 \rangle}{\sigma^2}\Big) -1.
    \label{eq.phialpha_for_normal}
\end{align}

\begin{proof}
The Lebesgue density of $P_j$ is 
\begin{align*}
    \frac 1{\sqrt{2\pi}}\exp\Big(-\frac {\|x-\theta_j\|^2}{2\sigma^2}\Big)=\frac 1{\sqrt{2\pi}} \exp\Big(-\frac {\|x\|^2}{2\sigma^2}\Big)\exp\Big(\frac 1{\sigma^2} \theta_j^\top x -\frac {\|\theta_j\|^2}{2\sigma^2}\Big),    
\end{align*}
with $\|\cdot\|$ the Euclidean norm. This is an exponential family $h(x)\exp(\langle \theta, T(x)\rangle -A(\theta))$ with $T(x) = \sigma^{-2}x$ and $A(\theta)=\|\theta\|^2/(2\sigma^2).$

Applying Proposition \ref{prop.exponential_family} and quadratic expansion $\|\theta_0+b\|^2=\|\theta_0\|^2+2\langle \theta_0, b\rangle+\|b\|^2$ to all four terms yields
\begin{align*}
    \Ralpha(P_0 | P_1, P_2)
    =
    &\exp\Big(\frac{\|\theta_0+\alpha(\theta_1+\theta_2-2\theta_0) \|^2}{2\sigma^2}
    - \frac{\|\theta_0+\alpha(\theta_1-\theta_0) \|^2}{2\sigma^2} \\
    &\quad\quad - \frac{\|\theta_0+\alpha(\theta_2-\theta_0) \|^2}{2\sigma^2}
    + \frac{\|\theta_0 \|^2}{2\sigma^2}\Big)-1 \\
    &= \exp\Big(\frac{\|\alpha(\theta_1+\theta_2-2\theta_0) \|^2- \|\alpha(\theta_1-\theta_0) \|^2 - \|\alpha(\theta_2-\theta_0) \|^2
    }{2\sigma^2}\Big)-1 \\
    &=\exp\Big(\alpha^2\frac{\langle \theta_1 - \theta_0,
    \theta_2 - \theta_0 \rangle}{\sigma^2}\Big) -1.
\end{align*}
\end{proof}

\subsection{Poisson distribution}

If $\Pois(\lambda)$ denotes the Poisson distribution with intensity $\lambda > 0,$ and $\lambda_0,\lambda_1,$ $\lambda_2 > 0$, then,
\begin{align*}
    \Ralphabig{\Pois(\lambda_0)}{
    \Pois(\lambda_1)}{
    \Pois(\lambda_2)}
    = \exp\Big(\lambda_0^{1 - 2\alpha}
    \big( \lambda_1^\alpha - \lambda_0^{\alpha} \big)
    \big( \lambda_2^\alpha - \lambda_0^{\alpha} \big) \Big) - 1.
\end{align*}
Suppose $P_j = \otimes_{\ell=1}^d \Pois(\lambda_{j\ell})$ for $j=0,\dots, M$ and $\lambda_{j\ell}>0$ for all $j,\ell.$
Then, as a consequence of Proposition~\ref{prop:phi_alpha_product},
\begin{align*}
    \Ralpha(P_0|P_1,P_2)
    = \exp\Big(\sum_{\ell=1}^d \lambda_{0\ell}^{1 - 2\alpha}
    \big( \lambda_{1\ell}^\alpha - \lambda_{0\ell}^{\alpha} \big)
    \big( \lambda_{2\ell}^\alpha - \lambda_{0\ell}^{\alpha} \big)
    \Big) - 1,
\end{align*}
with particular cases
\begin{align}
    \chi^2(P_0 | P_1, P_2)
    = \exp\Big( \sum_{\ell=1}^d \frac{(\lambda_{1\ell}-\lambda_{0\ell})(\lambda_{2\ell}-\lambda_{0\ell})}{\lambda_{0\ell}}\Big)-1,
\label{eq.chi2_matrix_for_Poisson}
\end{align}
and 
\begin{align}
    \rho(P_0 | P_1, P_2)
    = \exp\Big( \sum_{\ell=1}^d \big(\sqrt{\lambda_{1\ell}}-\sqrt{\lambda_{0\ell}}\big)\big(\sqrt{\lambda_{2\ell}}-\sqrt{\lambda_{0\ell}}\big)\Big) -1.
\label{eq.rho_matrix_for_Poisson}   
\end{align}

\begin{proof}
The density of the Poisson distribution with respect to the counting measure is
$$p_\lambda(x)
= e^{-\lambda} \frac{\lambda^x}{x!} 
= \frac{1}{x!}  e^{-\lambda + x \log(\lambda)} =h(x)\exp\big(\theta^\top T(x)-A(\theta)\big),$$
with $\theta = \log(\lambda)$, $T(x) = x$ and $A(\theta) = \exp(\theta)$.
Applying Proposition~\ref{prop.exponential_family} gives
\begin{align*}
    \Ralpha(P_{\theta_0}|P_{\theta_1},P_{\theta_2})
    &=\exp\Big(A\big(\theta_0+\alpha\big(\theta_1+\theta_2-2\theta_0\big)\big)
    - A\big(\theta_0+\alpha\big(\theta_1-\theta_0\big)\big) \\
    &\hspace{3em} - A\big(\theta_0+\alpha\big(\theta_2-\theta_0\big)\big)
    + A\big(\theta_0\big)\Big)-1. \\
    &= \exp\Big(\exp\big(\log(\lambda_0)
    + \alpha \big(\log(\lambda_1) + \log(\lambda_2) 
    - 2 \log(\lambda_0) \big)\big) \\
    &\hspace{3em}
    - \exp\big(\log(\lambda_0) 
    + \alpha \big(\log(\lambda_1) - \log(\lambda_0) \big)\big) \\
    &\hspace{3em}
    - \exp\big(\log(\lambda_0) 
    + \alpha \big(\log(\lambda_2) - \log(\lambda_0) \big)\big)
    + \lambda_0 \Big)-1. \\
    &= \exp\Big(
    \lambda_0^{1 - 2\alpha} \lambda_1^\alpha \lambda_2^\alpha
    - \lambda_0^{1 - \alpha} \lambda_1^\alpha
    - \lambda_0^{1 - \alpha} \lambda_2^\alpha
    + \lambda_0 \Big) - 1 \\
    &= \exp\Big(\lambda_0^{1 - 2\alpha}
    \big( \lambda_1^\alpha - \lambda_0^{\alpha} \big)
    \big( \lambda_2^\alpha - \lambda_0^{\alpha} \big) \Big) - 1.
\end{align*}
\end{proof}

\subsection{Bernoulli distribution}

If $\Ber(\theta)$ denotes the Poisson distribution with parameter $\theta \in (0,1),$ and $\theta_0,$ $\theta_1,$ $\theta_2 \in (0,1),$ then,
\begin{align*}
    &\Ralphabig{\Ber(\theta_0)}{\Ber(\theta_1)}{\Ber(\theta_2)} \\
    &= \frac{\theta_0^{1 - 2\alpha} \theta_1^\alpha \theta_2^\alpha
    + (1 - \theta_0)^{1 - 2 \alpha} (1 - \theta_1)^\alpha (1 - \theta_2)^\alpha}{
    \big( \theta_0^{1 - \alpha} \theta_1^\alpha
    + (1 - \theta_0)^{1 - \alpha} (1 - \theta_1)^\alpha \big)
    \big( \theta_0^{1 - \alpha} \theta_2^\alpha
    + (1 - \theta_0)^{1 - \alpha} (1 - \theta_2)^\alpha \big)
    } - 1,
\end{align*}
Suppose $P_j = \otimes_{\ell=1}^d \Ber(\theta_{j\ell})$ for $j=0, 1, 2$ and $\theta_{j\ell} \in (0,1)$ for all $j,\ell.$
Then, as a consequence of Proposition~\ref{prop:phi_alpha_product},
\begin{align*}
    \Ralpha(P_0|P_1,P_2)
    = \prod_{\ell=1}^d
    \frac{\theta_{0\ell}^{1 - 2\alpha} \theta_{1\ell}^\alpha \theta_{2\ell}^\alpha
    + (1 - \theta_{0\ell})^{1 - 2 \alpha} (1 - \theta_{1\ell})^\alpha (1 - \theta_{2\ell})^\alpha}{
    r(\theta_{0\ell}, \theta_{1\ell}) r(\theta_{0\ell}, \theta_{2\ell})
    } - 1,
\end{align*}
where $r(\theta_0,\theta_1) := \theta_0^{1 - \alpha} \theta_1^\alpha
+ (1 - \theta_0)^{1 - \alpha} (1 - \theta_1)^\alpha$. In particular,
\begin{align}
    \chi^2(P_0 | P_1, P_2) = 
    \prod_{\ell=1}^d
    \bigg(\frac{
    (\theta_{1\ell} - \theta_{0\ell})
    (\theta_{2\ell} - \theta_{0\ell}) }
    {\theta_{0\ell}(1-\theta_{0\ell})} + 1 \bigg) - 1,
\label{eq.chi2_matrix_for_Bernoulli}
\end{align}
and 
\begin{align}
    \rho(P_0 | P_1, P_2)
    = \prod_{\ell=1}^d \frac{\widetilde r(\theta_{1\ell},\theta_{2\ell})}
    {\widetilde r(\theta_{1\ell},\theta_{0\ell}) \widetilde r(\theta_{2\ell},\theta_{0\ell})} - 1,
\label{eq.rho_matrix_for_Bernoulli}   
\end{align}
with $\widetilde r(\theta,\theta')
:= \sqrt{\theta\theta'}+\sqrt{(1-\theta)(1-\theta')}.$

\begin{proof}
The Bernoulli distributions $\Ber(\theta), \theta \in (0,1)$ form an exponential family, dominated by the counting measure on $\{0, 1\}$ with density $P(\Ber(\theta) = k)
= \theta^k (1 - \theta)^{1 - k}
= \exp(k \log(\theta) + (1-k) \log(1 - \theta))
= \exp(k \beta - \log(1 + e^\beta) )$,
where $\beta = \log(\theta/(1-\theta))$ is the natural parameter and 
$A(\beta) = \log(1 + e^\beta)$.
Therefore, we can apply Proposition~\ref{prop.exponential_family} and obtain
\begin{align*}
    \Ralpha(P_{\beta_0}|P_{\beta_1},P_{\beta_2})
    = &\exp\Big(A\big(\beta_0+\alpha\big(\beta_1+\beta_2-2\beta_0\big)\big)
    - A\big(\beta_0+\alpha\big(\beta_1-\beta_0\big)\big) \\
    &\quad\quad - A\big(\beta_0+\alpha\big(\beta_2-\beta_0\big)\big)
    + A\big(\beta_0\big)\Big)-1.
\end{align*}
Note that
\begin{align*}
    \beta_0 + \alpha\big(\beta_1 - \beta_0\big)
    &= \log \left( \frac{\theta_0}{1 - \theta_0} \right)
    + \alpha \bigg( \log \left( \frac{\theta_1}{1 - \theta_1} \right)
    - \log \left( \frac{\theta_0}{1 - \theta_0} \right) \bigg) \\
    &= \log \left( \frac{\theta_0^{1 - \alpha} \theta_1^\alpha}
    {(1 - \theta_0)^{1 - \alpha} (1 - \theta_1)^\alpha} \right),
\end{align*}
so that
\begin{align*}
    A\big(\beta_0+\alpha\big(\beta_1-\beta_0\big)\big)
    &= \log \left(1 + \frac{\theta_0^{1 - \alpha} \theta_1^\alpha}
    {(1 - \theta_0)^{1 - \alpha} (1 - \theta_1)^\alpha} \right) \\
    &= \log \left(\frac{\theta_0^{1 - \alpha} \theta_1^\alpha
    + (1 - \theta_0)^{1 - \alpha} (1 - \theta_1)^\alpha}
    {(1 - \theta_0)^{1 - \alpha} (1 - \theta_1)^\alpha} \right).
\end{align*}
Similarly,
\begin{align*}
    &\beta_0 + \alpha\big(\beta_1 + \beta_2 - 2\beta_0\big)\\
    &= \log \left( \frac{\theta_0}{1 - \theta_0} \right)
    + \alpha \bigg( \log \left( \frac{\theta_1}{1 - \theta_1} \right)
    + \log \left( \frac{\theta_2}{1 - \theta_2} \right)
    - 2 \log \left( \frac{\theta_0}{1 - \theta_0} \right) \bigg) \\
    &= \log \left( \frac{\theta_0^{1 - 2\alpha} \theta_1^\alpha \theta_2^\alpha}
    {(1 - \theta_0)^{1 - 2 \alpha} (1 - \theta_1)^\alpha (1 - \theta_2)^\alpha} \right),
\end{align*}
so that
\begin{align*}
    A\big( \beta_0 + \alpha\big(\beta_1 + \beta_2 - 2\beta_0\big) \big)
    = \log \left( \frac{\theta_0^{1 - 2\alpha} \theta_1^\alpha \theta_2^\alpha
    + (1 - \theta_0)^{1 - 2 \alpha} (1 - \theta_1)^\alpha (1 - \theta_2)^\alpha}
    {(1 - \theta_0)^{1 - 2 \alpha} (1 - \theta_1)^\alpha (1 - \theta_2)^\alpha} \right).
\end{align*}
Combining all these results together yields
\begin{align*}
    \Ralpha(P_{\beta_0}&|P_{\beta_1},P_{\beta_2})
    = \exp\Bigg(
    \log \left( \frac{\theta_0^{1 - 2\alpha} \theta_1^\alpha \theta_2^\alpha
    + (1 - \theta_0)^{1 - 2 \alpha} (1 - \theta_1)^\alpha (1 - \theta_2)^\alpha}
    {(1 - \theta_0)^{1 - 2 \alpha} (1 - \theta_1)^\alpha (1 - \theta_2)^\alpha} \right) \\
    &\quad - \log \left(\frac{\theta_0^{1 - \alpha} \theta_1^\alpha
    + (1 - \theta_0)^{1 - \alpha} (1 - \theta_1)^\alpha}
    {(1 - \theta_0)^{1 - \alpha} (1 - \theta_1)^\alpha} \right) \\
    &\quad - \log \left(\frac{\theta_0^{1 - \alpha} \theta_2^\alpha
    + (1 - \theta_0)^{1 - \alpha} (1 - \theta_2)^\alpha}
    {(1 - \theta_0)^{1 - \alpha} (1 - \theta_2)^\alpha} \right)
    + \log(1 - \theta_0) \Bigg)  - 1\\
    &= \frac{\theta_0^{1 - 2\alpha} \theta_1^\alpha \theta_2^\alpha
    + (1 - \theta_0)^{1 - 2 \alpha} (1 - \theta_1)^\alpha (1 - \theta_2)^\alpha}{
    \big( \theta_0^{1 - \alpha} \theta_1^\alpha
    + (1 - \theta_0)^{1 - \alpha} (1 - \theta_1)^\alpha \big)
    \big( \theta_0^{1 - \alpha} \theta_2^\alpha
    + (1 - \theta_0)^{1 - \alpha} (1 - \theta_2)^\alpha \big)
    } - 1,
\end{align*}
finishing the proof.

\end{proof}

\subsection{Gamma distribution}
\label{sec:computation_Gamma_distr}

Let $P_\theta=\Gamma(\alpha, \beta)$ with $\theta=(\alpha-1,-\beta)$ denote the Gamma distribution with shape $\alpha > 0$ and inverse scale $\beta > 0$. If $\alpha_0, \alpha_1, \alpha_2, \beta_0, \beta_1, \beta_2, \alpha_0 + \alpha (\alpha_1 + \alpha_2 - 2 \alpha_0), \alpha_0 + \alpha (\alpha_1 - \alpha_0),\alpha_0 + \alpha (\alpha_2 - \alpha_0),\beta_0 + \alpha (\beta_1 + \beta_2 - 2 \beta_0),\beta_0 + \alpha (\beta_1 - \beta_0) > 0,$ and $\beta_0 + \alpha (\beta_2 - \beta_0)$  are all positive, then we have
\begin{align*}
    &\Ralpha(P_{\theta_0}|P_{\theta_1},P_{\theta_2})\\
    &= \frac{\Gamma(\alpha_0) \Gamma \big(\alpha_0
    + \alpha (\alpha_1 + \alpha_2 - 2 \alpha_0) \big)
    }{
    \Gamma \big(\alpha_0
    + \alpha (\alpha_1 - \alpha_0) \big)
    \Gamma \big(\alpha_0
    + \alpha (\alpha_2 - \alpha_0) \big)} \\
    &\quad \ \ \times \frac{
    \big(\beta_0
    + \alpha (\beta_1 - \beta_0) \big)^{\alpha_0
    + \alpha (\alpha_1 - \alpha_0)}
    \big(\beta_0
    + \alpha (\beta_2 - \beta_0) \big)^{\alpha_0
    + \alpha ( \alpha_2 - \alpha_0)}
    }{
    \beta_0^{\alpha_0}
    \big(\beta_0
    + \alpha (\beta_1 + \beta_2 - 2 \beta_0) \big)^{\alpha_0
    + \alpha (\alpha_1 + \alpha_2 - 2 \alpha_0)}
    }
    - 1,
\end{align*} 
otherwise $\Ralpha(P_{\theta_0}|P_{\theta_1},P_{\theta_2}) = +\infty$.
This can be checked by writing the explicit expression of the integrals that appear in the definition of $\Ralpha$.
%
%
\begin{proof}
The Gamma distributions $\Gamma(\alpha, \beta), \alpha > 0, \beta > 0$ form an exponential family, dominated by the Lebesgue measure with density
\begin{align*}
    \beta^{\alpha} x^{\alpha - 1} \exp(-\beta x)
    \Gamma (\alpha)^{-1}
    = \exp \big( (\alpha - 1) \log(x)
    - \beta x
    + \alpha \log(\beta)
    - \log(\Gamma (\alpha))
    \big),
\end{align*}
natural parameter
\begin{align*}
    \theta = (\theta_{a}, \theta_{b})
    = (\alpha - 1, - \beta),
\end{align*}
and 
\begin{align*}
    A(\theta) = \log\big(\Gamma (\theta_{a} + 1)\big)
    - (\theta_{a} + 1) \log(- \theta_{b}).
\end{align*}
Therefore, we can apply Proposition~\ref{prop.exponential_family} 
with $\Theta = (-1, +\infty) \times (- \Rb)$.
Combining the assumed constraints on the parameters and the linearity of the mapping $(\alpha, \beta) \mapsto \theta$ ensures that 
$\theta_0+\alpha\big(\theta_1-\theta_0\big),$
$\theta_0+\alpha\big(\theta_1-\theta_0\big),$
$\theta_0+\alpha\big(\theta_1+\theta_2-2\theta_0\big) \in \Theta$.
Therefore, we obtain
\begin{align*}
    \Ralpha(P_{\theta_0}|P_{\theta_1},P_{\theta_2})
    &=\exp\Big(A\big(\theta_0+\alpha\big(\theta_1+\theta_2-2\theta_0\big)\big)
    - A\big(\theta_0+\alpha\big(\theta_1-\theta_0\big)\big) \\
    &\hspace{3em} - A\big(\theta_0+\alpha\big(\theta_2-\theta_0\big)\big)
    + A\big(\theta_0\big)\Big)-1,
\end{align*}
where
\begin{align*}
    A\big(\theta_0+\alpha\big(\theta_1-\theta_0\big)\big)
    &= \log\Big(\Gamma \Big(\theta_{0, a}
    + \alpha \big(\theta_{1,a} - \theta_{0,a} \big) + 1
    \Big) \Big) \\
    &- \Big(\theta_{0, a}
    + \alpha \big(\theta_{1,a} - \theta_{0,a} \big) + 1
    \Big) \log \Big(\beta_0
    + \alpha \big(\beta_1 - \beta_0 \big) \Big) \\
    %
    &= \log\Big(\Gamma \Big(\alpha_0
    + \alpha \big( \alpha_1 - \alpha_0 \big)
    \Big) \Big) \\
    &- \Big(\alpha_0
    + \alpha \big( \alpha_1 - \alpha_0 \big)
    \Big) \log \Big(\beta_0
    + \alpha \big(\beta_1 - \beta_0 \big) \Big)
\end{align*}
and
\begin{align*}
    A\big(\theta_0+\alpha\big(\theta_1+\theta_2-2\theta_0\big)
    &= \log\Big(\Gamma \Big(\alpha_0
    + \alpha \big( \alpha_1 + \alpha_2 - 2 \alpha_0 \big)
    \Big) \Big) \\
    &- \Big(\alpha_0
    + \alpha \big( \alpha_1 + \alpha_2 - 2 \alpha_0 \big)
    \Big) \log \Big(\beta_0
    + \alpha \big(\beta_1 + \beta_2 - 2 \beta_0 \big) \Big).
\end{align*}
Combining all these results, we obtain
\begin{align*}
    &\Ralpha(P_{\theta_0}|P_{\theta_1},P_{\theta_2}) \\
    &= \frac{\Gamma(\alpha_0) \Gamma \Big(\alpha_0
    + \alpha \big( \alpha_1 + \alpha_2 - 2 \alpha_0 \big)
    \Big)
    }{
    \Gamma \Big(\alpha_0
    + \alpha \big( \alpha_1 - \alpha_0 \big)
    \Big)
    \Gamma \Big(\alpha_0
    + \alpha \big( \alpha_2 - \alpha_0 \big)
    \Big)} \\
    &\quad \ \ \times \frac{
    \big(\beta_0
    + \alpha (\beta_1 - \beta_0) \big)^{\alpha_0
    + \alpha (\alpha_1 - \alpha_0)}
    \big(\beta_0
    + \alpha (\beta_2 - \beta_0) \big)^{\alpha_0
    + \alpha ( \alpha_2 - \alpha_0)}
    }{
    \beta_0^{\alpha_0}
    \big(\beta_0
    + \alpha (\beta_1 + \beta_2 - 2 \beta_0) \big)^{\alpha_0
    + \alpha (\alpha_1 + \alpha_2 - 2 \alpha_0)}
    }
    - 1.
\end{align*}
\end{proof}

A formula for the product of exponential distributions can be obtained as a special case by setting $\alpha_{j\ell}=1$ for all $j,\ell$
and applying Proposition~\ref{prop:phi_alpha_product}.
For the families of distributions discussed above, the formulas for the correlation-type $\Ralpha$ codivergences encode an orthogonality relation on the parameter vectors.
This is less visible in the expressions for the Gamma distribution but can be made more explicit using the first order approximation that we state next.
It shows that even for the Gamma distribution these matrix entries can be written in leading order as a term involving a weighted inner product of $\beta_1-\beta_0$ and $\beta_2-\beta_0,$
where $\beta_r$ denotes the vector $(\beta_{r\ell})_{1 \leq \ell \leq d}.$

\begin{lemma}\label{lem.Gamma_expansion}
    Suppose $P_j = \otimes_{\ell=1}^d 
    \Gamma(\alpha_{\ell},\beta_{j\ell})$ for every $j=1,2,3$ and for some $\alpha_{\ell},\beta_{j\ell} > 0$.
    Let $A:=\sum_{\ell=1}^d \alpha_\ell$ and $\Delta:=\max_{j=1,2}\max_{\ell=1,\ldots,d} |\beta_{j\ell}-\beta_{0\ell}|/\beta_{0\ell}.$
    Denote by $\Sigma$ the $d\times d$ diagonal matrix with entries $\beta_{0\ell}^2/\alpha_{\ell}.$ Then, 
    \begin{align*}
        \Ralpha(P_0|P_1,P_2)
        = \exp \Big( - \alpha^2
        (\beta_1-\beta_0)^\top \Sigma^{-1}(\beta_2-\beta_0) + o(A \Delta^2)\Big)-1.
    \end{align*}
\end{lemma}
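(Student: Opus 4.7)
My plan is to combine Proposition~\ref{prop:phi_alpha_product} with the closed-form Gamma formula from Section~\ref{sec:computation_Gamma_distr} and then perform a second-order Taylor expansion in the relative perturbations of the inverse-scale parameters.

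First, the product formula yields
\begin{align*}
    \Ralpha(P_0|P_1,P_2)+1 = \prod_{\ell=1}^d \Big(\Ralpha\big(\Gamma(\alpha_\ell,\beta_{0\ell})\,\big|\,\Gamma(\alpha_\ell,\beta_{1\ell}),\Gamma(\alpha_\ell,\beta_{2\ell})\big)+1\Big).
\end{align*}
Because the shape $\alpha_\ell$ is common to $P_0,P_1,P_2$ in each coordinate, the four $\Gamma$-function factors appearing in the one-dimensional Gamma formula all reduce to $\Gamma(\alpha_\ell)$ and cancel, leaving only the $\beta$-dependent factor
\begin{align*}
    \left(\frac{\big(\beta_{0\ell}+\alpha(\beta_{1\ell}-\beta_{0\ell})\big)\big(\beta_{0\ell}+\alpha(\beta_{2\ell}-\beta_{0\ell})\big)}{\beta_{0\ell}\big(\beta_{0\ell}+\alpha(\beta_{1\ell}+\beta_{2\ell}-2\beta_{0\ell})\big)}\right)^{\alpha_\ell}.
\end{align*}

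Next, I would introduce the dimensionless perturbations $x_\ell := \alpha(\beta_{1\ell}-\beta_{0\ell})/\beta_{0\ell}$ and $y_\ell := \alpha(\beta_{2\ell}-\beta_{0\ell})/\beta_{0\ell}$, both of magnitude at most $\alpha\Delta$, so that the one-dimensional factor becomes $\big((1+x_\ell)(1+y_\ell)/(1+x_\ell+y_\ell)\big)^{\alpha_\ell}$. Taking logarithms, the problem reduces to the scalar Taylor expansion
\begin{align*}
    f(x,y) := \log(1+x)+\log(1+y)-\log(1+x+y) = xy + O\big((|x|+|y|)^3\big),
\end{align*}
valid uniformly on any fixed neighborhood of the origin. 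The cleanest route is to rewrite $f(x,y) = \log\!\big(1 + xy/(1+x+y)\big)$ and observe that the $x^2, y^2$ contributions of the numerator logarithms cancel against the $(x+y)^2$ contribution of the denominator logarithm, leaving $xy$ as the leading order.

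Summing the coordinatewise contributions with weights $\alpha_\ell$ and re-substituting, the leading term becomes the $\Sigma^{-1}$-weighted bilinear form $\alpha^2(\beta_1-\beta_0)^\top \Sigma^{-1}(\beta_2-\beta_0)$. The main obstacle is controlling the remainder uniformly in $\ell$: the per-coordinate cubic bound $|f(x_\ell,y_\ell)-x_\ell y_\ell| \leq C(|x_\ell|+|y_\ell|)^3 \leq C'\alpha^3\Delta^3$ holds with a constant independent of $\ell$ provided $\Delta$ is small enough to keep $1+x_\ell+y_\ell$ bounded away from zero; multiplying by $\alpha_\ell$ and summing reconstitutes $A = \sum_\ell \alpha_\ell$, giving a total remainder $O(A\alpha^3\Delta^3) = o(A\Delta^2)$. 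Exponentiating $\log(\Ralpha+1)$ then yields the claimed expansion.
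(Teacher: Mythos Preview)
Your proposal is correct and follows essentially the same route as the paper: start from the closed-form Gamma expression with common shape parameters (so the $\Gamma$-function factors cancel), take logarithms, and Taylor-expand $\log(1+x_\ell)+\log(1+y_\ell)-\log(1+x_\ell+y_\ell)$ to second order, where the linear terms cancel and the quadratic terms leave exactly $x_\ell y_\ell$; summing with weights $\alpha_\ell$ gives the $\Sigma^{-1}$-weighted bilinear form. Your rewriting $f(x,y)=\log\!\big(1+xy/(1+x+y)\big)$ is a slightly slicker way to see the same cancellation, and your explicit uniform remainder bound $O(A\alpha^3\Delta^3)=o(A\Delta^2)$ is a bit more careful than the paper, but the argument is the same.
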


\begin{proof}
Using that $\alpha_\ell$ does not depend on $j,$
the expression
simplifies and a second order Taylor expansion of the logarithm (the sum of the first order terms vanishes) yields
\begin{align*}
    &\Ralpha(P_{\theta_0}|P_{\theta_1},P_{\theta_2}) \\
    &= \prod_{\ell=1}^d
    \frac {
    (\beta_{1\ell} + \beta_{0\ell})^{\alpha_\ell}
    (\beta_{2\ell} + \beta_{0\ell})^{\alpha_\ell}
    }
    {(2 \beta_{0\ell})^{\alpha_{\ell}}
    (\beta_{1\ell} + \beta_{2\ell})^{\alpha_\ell}} \\
    &= \prod_{\ell=1}^d
    \exp \Bigg( \alpha_\ell \bigg(
    \log \bigg(1 + \alpha \frac{\beta_{1\ell} - \beta_{0\ell}}{ \beta_{0\ell}}
    \bigg)
    + \log \bigg(1 + \alpha \frac{\beta_{2\ell} - \beta_{0\ell}}{\beta_{0\ell}}
    \bigg) \\
    &\hspace{16.5em}
    - \log \bigg(1 + \alpha \frac{\beta_{1\ell} - \beta_{0\ell}
    + \beta_{2\ell} - \beta_{0\ell}}{\beta_{0\ell}}
    \bigg) \bigg) \Bigg) \\
    &= \exp \Bigg( \sum_{\ell=1}^d \alpha_\ell \alpha^2 \Bigg(
    - \frac{ (\beta_{1\ell}-\beta_{0\ell})^2}{2\beta_{0\ell}^2}
    - \frac{(\beta_{2\ell}-\beta_{0\ell})^2}{2\beta_{0\ell}^2} 
    + \frac{(\beta_{1\ell}-\beta_{0\ell} + \beta_{2\ell}-\beta_{0\ell})^2}{2\beta_{0\ell}^2}
     + o(\Delta^2)\bigg) \Bigg) \\
    &= \exp \Bigg( \alpha^2 \sum_{\ell=1}^d
    \frac{\alpha_\ell (\beta_{1\ell}-\beta_{0\ell}) (\beta_{2\ell}-\beta_{0\ell})}{\beta_{0\ell}^2}
    + o\big(A \Delta^2\big) \Bigg).
\end{align*}
\end{proof}

\section{Facts about ranks}
\label{sec:useful_lemmas}

\begin{defi}
    Let $X_1, \dots, X_n$ be $n$ random variables defined on the same probability space $(\Omega, \Ac, P)$. We define the \emph{rank} of $\{X_1, \dots, X_n\}$, denoted by $\Rank(X_1, \dots, X_n)$ as the dimension of the vector space $\Vect(X_1, \dots, X_n)$ generated by linear combinations of $\{X_1, \dots, X_n\}$, where the equality is to be understood $P$-almost surely. Moreover, we say that $(X_1, \dots, X_n)$ are \emph{linearly independent $P$-almost surely} if for any vector $(a_1, \dots, a_n),$
    \begin{align*}
        P \bigg( \sum_{i=1}^n a_i X_i = 0 \bigg) = 1
        \quad \text{implies} \ \  a_1 = \cdots = a_n = 0.
    \end{align*}
\label{def:rank_random_variables}
\end{defi}

\begin{lemma}
    Let $X_1, \dots, X_n$ be $n$ random variables defined on the same probability space $(\Omega, \Ac, P)$.
    $\Rank(X_1, \dots, X_n)$ is the largest integer such that there exists $i_1, \dots, i_r \in \{1, \dots, M\}$ with $(X_{i_1}, \dots, X_{i_r})$ linearly independent random variables $P$-almost surely.
\label{lemma:rank_max_lincombin}
\end{lemma}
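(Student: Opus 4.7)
The plan is to reduce the claim to the standard fact from linear algebra that, for any finite family of vectors in a vector space, the dimension of the span equals the maximum size of a linearly independent subfamily. The only subtlety is handling the ``$P$-almost surely'' quotient properly.

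First I would set the stage by working in the vector space $V := L^0(\Omega, \Ac, P)$ of equivalence classes of real-valued random variables modulo $P$-almost sure equality. Under this identification, $\Vect(X_1, \dots, X_n)$ from Definition~\ref{def:rank_random_variables} is literally the linear span (in $V$) of the classes of $X_1, \dots, X_n$, and the notion of $P$-a.s.\ linear independence from Definition~\ref{def:rank_random_variables} coincides with ordinary linear independence in $V$. Thus the claim becomes: in any vector space, the dimension of the span of $X_1, \dots, X_n$ equals the largest $r$ such that some subfamily $X_{i_1}, \dots, X_{i_r}$ is linearly independent.

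Next I would prove this equivalence by two inequalities. Let $d := \Rank(X_1, \dots, X_n) = \dim \Vect(X_1, \dots, X_n)$, and let $r^*$ denote the supremum in the statement. For $r^* \leq d$: if $X_{i_1}, \dots, X_{i_r}$ are $P$-a.s.\ linearly independent, then their classes are $r$ linearly independent elements of $\Vect(X_1, \dots, X_n)$, so $r \leq d$. For $d \leq r^*$: extract a maximal $P$-a.s.\ linearly independent subfamily $X_{i_1}, \dots, X_{i_r}$ from $\{X_1, \dots, X_n\}$ (such a subfamily exists since $n$ is finite, by greedily adding any $X_j$ that is not already a $P$-a.s.\ linear combination of those already selected). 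By maximality, every $X_j$ lies $P$-a.s.\ in $\Vect(X_{i_1}, \dots, X_{i_r})$, so $\Vect(X_1, \dots, X_n) \subseteq \Vect(X_{i_1}, \dots, X_{i_r})$, giving $d \leq r$. Combining, $d = r^*$.

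There is no real obstacle here: the only thing one has to be slightly careful about is that the passage from ``$P(\sum a_i X_i = 0) = 1$'' to genuine linear independence in $V$ respects countable unions so that finitely many a.s.\ identities combine into a single a.s.\ identity, which is immediate since we deal only with finitely many random variables at once. Once this is noted, the rest is pure linear algebra in a finite-dimensional subspace of $V$, and the argument above concludes the proof.
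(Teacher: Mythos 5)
Your proof is correct and follows essentially the same route as the paper's: both reduce the statement to the standard linear-algebra fact that the dimension of the span of finitely many vectors equals the maximal size of a linearly independent subfamily, after identifying $P$-a.s.\ equality classes with elements of a vector space. If anything, your greedy extraction of a maximal independent subfamily makes the inequality $\dim \Vect(X_1,\dots,X_n) \leq r$ more explicit than the paper's brief contradiction argument, but the underlying idea is identical.
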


\begin{proof}
    Let $r$ be the largest integer such that there exists $i_1, \dots, i_r \in \{1, \dots, M\}$ with $(X_{i_1}, \dots, X_{i_r})$ linearly independent random variables $P$-almost surely.
    Then the space generated by $X_1, \dots, X_n$ is at least of dimension $r$, and therefore $\Rank(X_1, \dots, X_n) \geq r$.
    If $\Rank(X_1, \dots, X_n) > r$, then there exists $(r+1)$ linear combinations of the random variables that are linearly independent, contradicting the definition of $r$. Therefore $\Rank(X_1, \dots, X_n) \leq r$, completing the proof.
\end{proof}

\begin{lemma}
    Let $\Z = (Z_1,\ldots,Z_M)^\top$ be a $M$-dimensional random vector with mean zero and finite second moments.
    Then $\Rank(\Cov_P(\Z)) = \Rank(Z_1, \dots, Z_M)$,
    where the covariance matrix is computed with respect to the distribution $P$ and the rank of a set of random variables is to be understood in the sense of Definition~\ref{def:rank_random_variables}.
\label{lemma:rank_CovZ}
\end{lemma}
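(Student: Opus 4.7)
The plan is to use the identification of the kernel of the covariance matrix with the space of linear relations among $Z_1,\ldots,Z_M$ in $L^2(P)$, and then apply the rank-nullity theorem.

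First I would observe that since $\Z$ is centered, $\Cov_P(\Z) = E_P[\Z \Z^\top]$, so for any $a \in \Rb^M$ one has
\begin{align*}
    a^\top \Cov_P(\Z) a = E_P\big[(a^\top \Z)^2\big] = \Var_P(a^\top \Z).
\end{align*}
Because $\Cov_P(\Z)$ is positive semi-definite, $a$ lies in its kernel if and only if $a^\top \Cov_P(\Z) a = 0$, which by the display above is equivalent to $a^\top \Z = 0$ $P$-almost surely (again using the centering, so that zero variance forces the variable to equal its mean, namely $0$). Thus
\begin{align*}
    \ker(\Cov_P(\Z)) = \big\{ a \in \Rb^M : \textstyle\sum_{j=1}^M a_j Z_j = 0 \ P\text{-a.s.} \big\}.
\end{align*}

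Next I would translate this kernel statement into a rank statement. By Definition~\ref{def:rank_random_variables}, the dimension of the above kernel is exactly $M - \Rank(Z_1,\ldots,Z_M)$: a basis of the kernel is in one-to-one correspondence with a maximal set of independent linear relations among $Z_1,\ldots,Z_M$ (equivalently, via Lemma~\ref{lemma:rank_max_lincombin}, with the complement of a maximal $P$-a.s.\ linearly independent subfamily). Applying the rank-nullity theorem to the linear map $\Cov_P(\Z) : \Rb^M \to \Rb^M$ then yields
\begin{align*}
    \Rank(\Cov_P(\Z)) = M - \dim \ker(\Cov_P(\Z)) = M - \big(M - \Rank(Z_1,\ldots,Z_M)\big) = \Rank(Z_1,\ldots,Z_M),
\end{align*}
which is the claim.

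No step looks particularly delicate: the mild point is the equivalence between $\Var_P(a^\top \Z) = 0$ and $a^\top \Z = 0$ $P$-a.s., which requires the centering assumption $E_P[\Z] = 0$, and the counting of linear relations from Definition~\ref{def:rank_random_variables}, which is routine linear algebra in $L^2(P)$.
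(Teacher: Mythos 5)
Your proof is correct, but it follows a genuinely different route from the paper's. The paper diagonalizes $\Cov_P(\Z)$ via the spectral theorem, forms the principal components $Y_i = \e_i^\top \Z$, and compares $\dim(\Vect(Z_1,\dots,Z_M))$ with $\dim(\Vect(Y_1,\dots,Y_r))$ using the fact that the $Y_i$ with zero variance vanish almost surely and the remaining ones are orthogonal in $L_2(\Omega,\Ac,P)$. You instead identify $\ker(\Cov_P(\Z))$ with the space of $P$-a.s.\ linear relations among the $Z_j$ — using positive semi-definiteness to pass from $a^\top \Cov_P(\Z)\, a = 0$ to $\Cov_P(\Z)\, a = 0$, and the centering to pass from $\Var_P(a^\top\Z)=0$ to $a^\top\Z = 0$ a.s.\ — and then apply rank--nullity twice: once to $\Cov_P(\Z)$ and once (implicitly) to the evaluation map $a \mapsto \sum_j a_j Z_j$ from $\Rb^M$ into $L^2(P)$, whose image is $\Vect(Z_1,\dots,Z_M)$ and whose kernel is the same space of relations. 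Your argument is more elementary in that it avoids the spectral decomposition and the appeal to results on principal components, at the cost of needing the (standard) fact that a positive semi-definite matrix with vanishing quadratic form at $a$ annihilates $a$. The one slightly loose phrase is the parenthetical identification of a kernel basis with ``the complement of a maximal linearly independent subfamily''; the clean statement is simply rank--nullity for the map $a \mapsto \sum_j a_j Z_j$, which gives $\dim\ker = M - \Rank(Z_1,\dots,Z_M)$ directly, and that is what your displayed computation actually uses. Both proofs are valid and of comparable length.
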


\begin{proof}
    Let $\lambda_1 \geq \lambda_2 \geq \dots \geq \lambda_M$ be the eigenvalues of $\Cov_P(\Z)$, sorted in decreasing order, and let $\e_1, \dots, \e_M$ be a corresponding orthonormal basis of eigenvectors.
    Let $r$ be the rank of $\Cov_P(\Z)$.
    We have $\lambda_{r+1} = \lambda_{r+2} = \cdots = \lambda_M = 0$ and $\lambda_r > 0$.
    Let us define $Y_i = \e_i^\top \Z$ for $i=1, \dots, M$.
    By usual results on principal components, e.g. \cite[Result 8.1]{johnson2007applied}, $\Var[Y_i] = \lambda_i$ and $\Cov(Y_i, Y_j) = \lambda_i 1_{\{i=j\}}$.
    Therefore,
    \begin{align*}
        \Rank(Z_1, \dots, Z_M)
        &= \dim(\Vect(Z_1, \dots, Z_M))
        = \dim(\Vect(Y_1, \dots, Y_M)) \\
        & = \dim(\Vect(Y_1, \dots, Y_r)) = r,
    \end{align*}
    where the first equality is the definition of the rank, the second equality is a consequence of the fact that $(\e_1, \dots, \e_M)$ is a basis of $\Rb^M$, the third equality results from the fact that $\Var[Y_i] = 0$ and $E[Y_i] = 0$ for any $i > r$ and the last equality is a consequence of the orthogonality of the $(Y_1, \dots, Y_r)$ as elements of the Hilbert space $L_2(\Omega, \Ac, P)$.
    The proof is completed since by definition $r = \Rank(\Cov_P(\Z))$.
\end{proof}

\begin{lemma}[see for example Exercise 3.3.11 in \cite{rao2000linear}]
    A symmetric and positive semi-definite $M\times M$ matrix $A$ is of rank $r$ if and only if $A$ has an invertible principal submatrix of size $r$, and all principal submatrices of size $r+1$ of $A$ are singular.
\label{lemma:rank_principal_minors}
\end{lemma}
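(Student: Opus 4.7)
The plan is to reduce the statement to a fact about Gram matrices by using the factorization of symmetric positive semi-definite matrices. First, I would write $A = B^\top B$ where $B$ is an $r \times M$ matrix of full row rank $r := \Rank(A)$. For any index set $I \subseteq \{1, \dots, M\}$, the principal submatrix $A_{I,I}$ then equals $B_I^\top B_I$, where $B_I$ denotes the submatrix of $B$ consisting of the columns indexed by $I$. Since the rank of a Gram matrix equals the rank of the underlying matrix, I would use the identity $\Rank(A_{I,I}) = \Rank(B_I)$ as the central tool in both directions.

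For the forward direction, assume $\Rank(A) = r$. Since $B$ has rank $r$, I can pick $r$ linearly independent columns of $B$, say indexed by some $I$ with $|I| = r$. This gives $\Rank(A_{I,I}) = r$, so $A_{I,I}$ is an invertible $r \times r$ principal submatrix of $A$. On the other hand, for any $J$ with $|J| = r+1$, the columns $\{b_j : j \in J\}$ lie in the $r$-dimensional column space of $B$, hence are linearly dependent, so $\Rank(A_{J,J}) \leq r$; therefore $A_{J,J}$ is singular.

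For the reverse direction, the existence of an invertible $r \times r$ principal submatrix immediately gives $\Rank(A) \geq r$. To get equality, I would argue by contradiction: suppose $\Rank(A) = r' \geq r+1$. Applying the forward direction with $r'$ in place of $r$, there is an invertible principal submatrix $A_{I', I'}$ of size $r'$. Being both positive semi-definite (as a principal submatrix of a PSD matrix) and invertible, $A_{I', I'}$ is positive definite. I would then invoke the standard fact that every principal submatrix of a positive definite matrix is itself positive definite, hence invertible. Picking any $J \subseteq I'$ with $|J| = r+1$ thus yields an invertible $(r+1)\times(r+1)$ principal submatrix of $A$, contradicting the hypothesis and forcing $\Rank(A) = r$.

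The main subtlety will be in the reverse direction: for a general, non-PSD matrix, having an invertible $r \times r$ principal submatrix together with only singular $(r+1) \times (r+1)$ principal submatrices does not imply that the full rank is $r$. The positive semi-definiteness is used twice in the argument — first, to promote the hypothetical larger invertible principal submatrix from merely invertible to positive definite, and second, to deduce that all of its principal subblocks remain positive definite, and therefore invertible. Without this structure the inductive descent from size $r'$ down to size $r+1$ would fail.
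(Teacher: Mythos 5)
Your proof is correct. Note that the paper itself does not prove this lemma --- it is quoted with a citation to Exercise 3.3.11 in the linear algebra reference --- so there is no in-paper argument to compare against; your Gram-factorization proof is a valid self-contained substitute. Both directions are sound: the identity $\Rank(A_{I,I})=\Rank(B_I^\top B_I)=\Rank(B_I)$ is the right tool, and you correctly isolate where positive semi-definiteness is indispensable (a general matrix can have all $(r+1)\times(r+1)$ \emph{principal} minors vanish while having rank exceeding $r$, e.g.\ $\bigl(\begin{smallmatrix}0&1\\0&0\end{smallmatrix}\bigr)$ with $r=0$). One small simplification is available in your reverse direction: once you know $\Rank(A)=r'\geq r+1$, the matrix $B$ (now $r'\times M$ of full row rank) has $r'$ linearly independent columns, and any $r+1$ of \emph{those} columns are already linearly independent, so the corresponding principal $(r+1)\times(r+1)$ submatrix is invertible --- no need to pass through the positive-definiteness of $A_{I',I'}$ and its subblocks. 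Your detour through positive definiteness is nonetheless correct.
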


\section{Conclusion}

We introduced the concept of codivergence as a notion of ``angle'' between three probability distributions. Divergence matrices can be viewed as an analogue of the Gram matrix for a finite sequence of probability distributions that are compared relative to one distribution.

\medskip

Locally around the reference probability measure $P_0$, codivergences are bilinear forms up to remainder terms. Two classes of codivergences emerge that resemble the structure of the covariance and the correlation.

\medskip

Natural follow-up questions relate to the spectral behavior of a divergence matrix and the link between properties of the divergence matrix and properties of the underlying probability measures.



\backmatter





\bmhead{Acknowledgments}
We are grateful to the Associate Editor and two referees for valuable comments, suggesting Proposition \ref{prop.exponential_family}, and an idea that led us consider the two general classes of covariance-type and correlation-type codivergences.



\section*{Declarations}

\textbf{Funding}: The research has been supported by the NWO/STAR grant 613.009.034b and the NWO Vidi grant VI.Vidi.192.021.

\smallskip

\noindent
\textbf{Competing interests}: On behalf of all authors, the corresponding author states that there is no conflict of interest.

\smallskip

\noindent
\textbf{Data availability}: Data sharing not applicable to this article as no datasets were generated or analysed during the current study.

\smallskip

\noindent
\textbf{Author's contributions}: Both authors contributed equally to this work.









\begin{appendices}






\section{Proofs}

\subsection{Proof of Proposition~\ref{prop:codiv}}
\label{proof:prop:codiv}

\begin{proof}
As mentioned already, the first and second part of Definition~\ref{def:codiv} are satisfied.
To check the third part of Definition~\ref{def:codiv} for $\phi(P_0|P_1,P_2),$ let $\mu, \wt \mu \in \Mc_{P_0}$. Then
\begin{align*}
    \frac{d(P_0 + t \mu)}{dP_0}
    = 1 + t \frac{d\mu}{dP_0} = 1 + t h,
\end{align*}
is square-integrable with respect to $P_0$ for any real number $t$. Using $\phi(1)=1$, Taylor expansion $\phi(1+y)=1+y \phi'(1) +\tfrac{1}{2}y^2\phi''(1) + o(y^2),$ that $\int h dP_0=0$ and that $h$ is bounded $P_0$-a.e. by the definition of $\Mc_{P_0},$ we obtain that, for all sufficiently small $t,$
\begin{align*}
    \int \phi\Big( \frac{d(P_0 + t \mu)}{dP_0}\Big) \, dP_0
    = \int \phi(1+th) \, dP_0
    = 1 + \frac{t^2}{2}\phi''(1)\int h^2 \, dP_0+o(t^2).
\end{align*}
Similarly $\int \phi\big( \frac{d(P_0 + s \wt \mu)}{dP_0}\big) \, dP_0= 1+\tfrac{1}{2}s^2\phi''(1)\int g^2 \, dP_0+o(s^2)$ and 
\begin{align*}
    &\int \phi\Big( \frac{d(P_0 + t \mu)}{dP_0}\Big)\phi\Big( \frac{d(P_0 + s \wt \mu)}{dP_0}\Big) \, dP_0\\
    &= \int \phi(1+th)\phi(1+sg) \, dP_0 \\
    &= 1+\frac {\phi''(1)}2 \int (t^2 h^2+s^2g^2) \, dP_0 + st \phi'(1)^2\int gh \, dP_0 +o(t^2+s^2) \\
    &= \Big(1+\frac{t^2}{2}\phi''(1)\int h^2 \, dP_0\Big)\Big(1+\frac{s^2}{2}\phi''(1)\int g^2 \, dP_0\Big)+ st \phi'(1)^2 \int gh \, dP_0 +o(t^2+s^2).
\end{align*}
Taylor expansion yields $1/(1-y) = 1 + O(y)$ for all $|y|\leq 1/2$ and thus for $(s,t)\to (0,0),$
\begin{align*}
    &\Rphi(P_0 | P_0 + t \mu, P_0 + s \wt \mu) \\
    &= \frac{st \phi'(1)^2 \int gh \, dP_0 + o(t^2+s^2)}{(1+\frac{t^2}{2}\phi''(1)\int h^2 \, dP_0)(1+\frac{s^2}{2}\phi''(1)\int g^2 \, dP_0) +o(t^2+s^2)} \\
    &= st \phi'(1)^2 \int gh \, dP_0 + o(t^2+s^2).
\end{align*}
By following the same arguments and replacing the definition of $\Rphi(P_0 | P_0 + t \mu, P_0 + s \wt \mu)$ by $\Vphi(P_0 | P_0 + t \mu, P_0 + s \wt \mu) $ in the last step, we also obtain $\Vphi(P_0 | P_0 + t \mu, P_0 + s \wt \mu)=st \phi'(1)^2 \int gh \, dP_0 + o(t^2+s^2).$
\end{proof}

\subsection{Proof of Proposition~\ref{prop:phi_alpha_product}}
\label{proof:prop:phi_alpha_product}

\begin{proof}
By Fubini's theorem,
\begin{align*}
    &\RalphaBigg{
    \bigotimes_{\ell=1}^d P_{0\ell}}{
    \bigotimes_{\ell=1}^d P_{1\ell}}{
    \bigotimes_{\ell=1}^d P_{2\ell}}+1  \\
    &= \dfrac{\displaystyle \int \left(
    \frac{d\bigotimes_{\ell=1}^d P_{1\ell}} {d\bigotimes_{\ell=1}^d P_{0\ell}} \right)^\alpha
    \left(
    \frac{d\bigotimes_{\ell=1}^d P_{2\ell}} {d\bigotimes_{\ell=1}^d P_{0\ell}} \right)^\alpha
    d \left(\bigotimes_{\ell=1}^d P_{0\ell} \right)
    }{\displaystyle \int \left(
    \frac{d\bigotimes_{\ell=1}^d P_{1\ell}} {d\bigotimes_{\ell=1}^d P_{0\ell}} \right)^\alpha
    d \left(\bigotimes_{\ell=1}^d P_{0\ell} \right)
    \int \left(
    \frac{d\bigotimes_{\ell=1}^d P_{2\ell}} {d\bigotimes_{\ell=1}^d P_{0\ell}} \right)^\alpha
    d \left(\bigotimes_{\ell=1}^d P_{0\ell} \right)
    } \\
    &= \prod_{\ell=1}^d \dfrac{\displaystyle \int \left(
    \frac{dP_{1\ell}} {dP_{0\ell}} \right)^\alpha
    \left(\frac{dP_{2\ell}} {dP_{0\ell}} \right)^\alpha
    dP_{0\ell}
    }{\displaystyle \int \left(
    \frac{d P_{1\ell}} {dP_{0\ell}} \right)^\alpha
    dP_{0\ell}
    \int \left(
    \frac{dP_{2\ell}} {dP_{0\ell}} \right)^\alpha
    dP_{0\ell}
    } \\
    &= \prod_{\ell=1}^d \Ralpha(
    P_{0\ell} |
    P_{1\ell},
    P_{2\ell})+1.
\end{align*}
\end{proof}

\subsection{Proof of Proposition~\ref{prop.exponential_family}}
\label{proof:prop.exponential_family}

\begin{proof}
Write $P_{i}:=P_{\theta_i}$ and $p_i$ for the corresponding $\nu$-densities. By assumption, we have $\overline \theta_\alpha:=\alpha (\theta_1+\theta_2)+(1-2\alpha)\theta_0 \in \Theta$ and
\begin{align*}
    \int \Big(\frac{dP_1}{dP_0}\Big)^\alpha \Big(\frac{dP_2}{dP_0}\Big)^\alpha dP_0
    &= \int \big(p_1(x)p_2(x)\big)^\alpha p_0(x)^{1-2\alpha} \, d\nu(x) \\
    &= \int h(x) 
    \exp\big( \overline \theta_\alpha^\top T(x)\big) \, d\nu(x) \\
    &\quad \cdot \exp\big(-\alpha A(\theta_1)-\alpha A(\theta_2)+(1-2\alpha)A(\theta_0)\big) \\
    &= \exp\big(A(\overline \theta_\alpha)-\alpha A(\theta_1)-\alpha A(\theta_2)+(1-2\alpha)A(\theta_0)\big).
\end{align*}
Setting $P_2=P_0$ (or equivalently, $\theta_2=\theta_0$) in the previous identity gives
\begin{align*}
    \int \Big(\frac{dP_1}{dP_0}\Big)^\alpha dP_0
    &= \exp\big(A\big(\alpha \theta_1+(1-\alpha)\theta_0\big)-\alpha A(\theta_1)+(1-\alpha)A(\theta_0)\big).
\end{align*}
Interchanging the role of $\theta_2$ and $\theta_1$ provides moreover a closed-form expression for $\int \big(\frac{dP_2}{dP_0} \big)^\alpha dP_0.$ Combining everything yields 
\begin{align*}
    &\Ralpha(P_0|P_1,P_2)
    = \dfrac{\int \big(\frac{dP_1}{dP_0}\big)^\alpha \big(\frac{dP_2}{dP_0}\big)^\alpha dP_0}{\int \big(\frac{dP_1}{dP_0}\big)^\alpha dP_0 \, \int \big(\frac{dP_2}{dP_0}\big)^\alpha dP_0} -1 \\
    &= \exp\Big(A\big(\overline \theta_\alpha\big)
    - A\big(\theta_0+\alpha\big(\theta_1-\theta_0\big)\big)
    - A\big(\theta_0+\alpha\big(\theta_2-\theta_0\big)\big)
    + A\big(\theta_0\big)\Big)-1.
\end{align*}
\end{proof}

\end{appendices}


\bibliography{biblio}


\end{document}